\newtheorem{thm}{Theorem}[section]
\newtheorem{lem}[thm]{Lemma}
\newtheorem{defn}[thm] {Definition}
\newtheorem{ex}  [thm]{Example}
\newtheorem{cor}[thm]{Corollary}
\theoremstyle{remark}
\newtheorem{rem} [thm]{Remark}
\theoremstyle{definition}
\DeclareMathOperator{\supp}{supp}
\DeclareMathOperator{\diam}{diam}
\DeclareMathOperator{\dist}{dist}
\DeclareMathOperator{\Int}{Int}
\newcommand{\pw}{p\omega}
\newcommand{\cS}{\mathcal{S}}
\newcommand{\Leb}{\mathscr{L}}
\newcommand{\eps}{\varepsilon}
\newcommand{\R}{\mathbb{R}}
\newcommand{\N}{{\mathbb{N}}}
\newcommand{\Z}{\mathbb{Z}}
\newcommand{\set}[1]{\left\{#1\right\}}
\newcommand{\htop}{h_{\text{top}}}
\newcommand{\M}{\mathcal{M}}
\newcommand{\E}{\mathcal{E}}
\newcommand{\En}{\E_n}
\begin{document}

\title[On the irregular points for systems with the shadowing property]{On the irregular points for systems with the shadowing property}
\author{Yiwei Dong}
\address[Y. Dong]{School of Mathematical Science,  Fudan University\\Shanghai 200433, People's Republic of China}
\email{yiweidong@fudan.edu.cn}
\author{Piotr Oprocha }
\address[P. Oprocha]{AGH University of Science and Technology\\
Faculty of Applied Mathematics\\
al. A. Mickiewicza 30, 30-059 Krak\'ow,
Poland\\ -- and ---\\
National Supercomputing Centre IT4Innovations, Division of the University of Ostrava,
Institute for Research and Applications of Fuzzy Modeling,
30. dubna 22, 70103 Ostrava,
Czech Republic} \email{oprocha@agh.edu.pl}
\author{Xueting Tian}
\address[X. Tian]{School of Mathematical Science,  Fudan University\\Shanghai 200433, People's Republic of China}
\email{xuetingtian@fudan.edu.cn}

\keywords{Shadowing, Irregular Points, C$^{0}$ Generic, Topological Entropy}
\subjclass[2010] { 37C50; 37B40; 37C20.  }
\maketitle

\begin{abstract}
We prove that when $f$ is a continuous self-map acting on a compact metric space $(X,d)$ which satisfies the shadowing property, then the set of irregular points (i.e. points with divergent Birkhoff averages)
has full entropy.

Using this fact we prove that in the class of $C^0$-generic maps on manifolds, we can only observe (in the sense of Lebesgue measure)
points with convergent Birkhoff averages. In particular, the time average of atomic measures along orbit of such points converges to some SRB-like measure in the weak$^*$ topology. Moreover, such points carry zero entropy. In contrast, irregular points are non-observable
but carry infinite entropy.
\end{abstract}

\section{Introduction}
Consider a continuous self-map $f$ acting on a compact metric space $(X,d)$. For a continuous function $\varphi\colon X\to \R$, every point $x\in X$ and every positive integer $n$ we can calculate \textit{the Birkhoff average} $\frac{1}{n}\sum_{i=0}^{n-1}\varphi(f^i(x))$. According to the classical Birkhoff ergodic theorem, such average converges for all $x$ from a set carrying full measure for every $f$-invariant measure. We call such points \textit{$\varphi-$regular points}. The complementary set $I_\varphi(f)$ of points where Birkhoff average diverges is called \textit{$\varphi-$irregular set} and the union
$I(f):=\bigcup_{\varphi\in C(X,\mathbb{R})}I_{\varphi}(f)$ is called \textit{irregular set}.

From the viewpoint of the ergodic theory, the irregular sets $I_{\varphi}(f)$ and $I(f)$ are too small to be concerned. However, if we change measurement method, they may be quite large. To be concrete, let us recall some related results motivated in topological dynamics.

Fan \emph{et al} showed in \cite{FFW} that for mixing subshifts of finite type (i.e. subshifts with the shadowing property \cite{Walters2}) the set $I_{\varphi}(f)$ either is empty or has full entropy. Later on, Barreira and Schmeling \cite{Barreira-Schmeling2000} studied a broad class of systems including conformal repellers and horseshoes and showed that the irregular set carries full entropy and full Hausdorff dimension. Besides that, Olsen \cite{Olsen2002,Olsen2003,Olsen-Winter} introduced and developed a multi-fractal framework via deformations of empirical measures. Based on these results, Chen \emph{et al} \cite{CTS} considered irregular points for systems with the specification property. They used a method inspired by results of Takens and Verbitskiy from \cite{Takens-Verbitskiy} and showed that $I(f)$ has full topological entropy. Aside from dimensional theory, 
Li and Wu \cite{Li-Wu2014} proved that for systems with the specification property, $I_{\varphi}(f)$ either is empty or residual, which shows that $I_\varphi(f)$  and $I(f)$ are large from topological point of view (cf. results in \cite{DGS} on points with maximal oscillation). 
In \cite{FKKL} authors provide example with topological mixing, dense periodic points, countably many ergodic measures and empty irregular set (every point is generic for some ergodic measure).

All dynamical systems in the above mentioned results satisfy the specification property which was first introduced by Bowen to study Axiom A diffeomorphisms \cite{Bowen1971}. It is natural to ask what is the situation beyond the case of specification? More precisely, for which kinds of systems and to what extent can similar conclusions hold? Recently, Thompson gained some result in this direction. He considered systems with the almost specification property which is a natural weakening of  the specification property (see \cite{PS2007}) and proved in \cite{Thompson2012} that  $I_{\varphi}(f)$ either is empty or carries full entropy.

In this paper, we focus on another important property which motivated Bowen to define the specification property. Strictly speaking, we consider systems with the shadowing property, providing topological and measure-theoretic characterizations of $I(f)$ in their context.

Before proceeding, we should mention that neither of the properties of almost specification or shadowing implies the other. This can be seen from the following two examples.


\begin{ex}\label{shadowing-not-entropy-dense}
Consider a homeomorphism $f\colon [0,1]\to [0,1]$ such that $0,1$ are the only fixed points of $f$.
Then it is not hard to check that $f$ has the shadowing property (e.g. see \cite{CL}).
It is clear that the only invariant measures of $f$ are supported on fixed points $0,1$ and one of them is an attractor.
This immediately implies that $I(f)=\emptyset$ and one easily sees that $\frac{\delta_0+\delta_1}{2}\in \mathcal{M}_f([0,1])$ cannot be approximated by ergodic measures. Thus $([0,1],f)$ does not satisfy the almost specification property since for such systems, the ergodic measures are entropy dense \cite{PS2005}.
\end{ex}

\begin{ex}
It is well known that every $\beta$-shift has the almost specification property \cite{PS2005}, while the set of $\beta$ for which the $\beta$-shift has the specification property has zero Lebesgue measure \cite{Buzzi,Schmeling}. So let us consider some $\beta-$shift $\sigma\colon X_{\beta}\to X_{\beta}$ which satisfies the almost specification property but without the specification property. According to \cite[Theorem 3.8]{KKO}, when $f\colon X\to X$ is a continuous map with the shadowing property, the following two statement are equivalent:
\begin{itemize}
  \item $f$ is surjective and has the almost specification property;
  \item $f$ is surjective and has the specification property.
\end{itemize}
The $\beta$-shift is obviously surjective by definition \cite[p. 179]{Walters}. Hence $(X_{\beta},\sigma)$ may not have the shadowing property, because then it has the specification property which is a contradiction to its selection.
\end{ex}

\begin{rem}
It is proved in \cite[Theorem~3.8]{KKO} that totally transitive (i.e. $(X,f^n)$ transitive for every $n\geq 1$) maps with the shadowing property  have the specification property.
In particular, Thompson's results from \cite{Thompson2012} apply to this case.
\end{rem}

Some new obstacles appear compared to Thompson's result. For example there is no hope for transitivity in general as was the case in Example~\ref{shadowing-not-entropy-dense}.
 But even transitive but not mixing case is problematic, while it seems that relative almost specification property, as defined in \cite{KLO}, may still be sufficient to obtain Thompson's results.
	
\begin{ex}
Let $(X,f)$ be the dyadic adding machine (e.g. see \cite{Kurka} or \cite{Downar} for definition and basic properties) and let $(Y,g)$ be any mixing map with the shadowing property (e.g. full shift on $2$-symbols).
Note that every equicontinuous dynamical system on Cantor set has the shadowing property \cite{Kurka}, so $(X,f)$ has the shadowing property.
Then dynamical system $(X\times Y,f\times g)$ is transitive and has the shadowing property. But it does not have almost specification, since it is not mixing \cite{KKO}.
\end{ex}

To deal with cases beyond the specification property, in our approach we will focus on some ergodic measure $\mu$ which captures enough information of the entropy (by variational principle) and manipulate its points by the shadowing property.
The main result of the paper is the following.
\begin{thm}\label{X-full}
If $(X,f)$ has the shadowing property then exactly one of the following conditions holds:
\begin{enumerate}
\item $\htop(f)>0$, $I(f)\neq \emptyset$ and has full entropy;
\item $\htop(f)=0$ and $I(f)=\emptyset$.
\end{enumerate}
\end{thm}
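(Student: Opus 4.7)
The plan is to establish the dichotomy through two shadowing-based constructions. The bulk of the work is case~(1): I will produce many irregular points with controlled entropy, counted via Katok's entropy formula. Case~(2) will then follow by a short contradiction using the same building blocks.

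For case~(1), fix $\eta>0$ and pick, by the variational principle, an ergodic measure $\mu$ with $h_\mu(f) > \htop(f)-\eta$. Because $\htop(f)>0$, the topological entropy concentrates on some chain-recurrent component $C$ of $f$, and on $C$ the system is chain transitive with positive entropy; in particular $C$ carries distinct ergodic measures, so I can choose a second invariant measure $\nu$ supported in $C$ and a continuous $\varphi$ with $a:=\int\varphi\,d\mu \neq \int\varphi\,d\nu=:b$. By Katok's formula, for every small $\eps>0$ and every large $n$ there exists an $(n,\eps)$-separated set $E_n$ of $\mu$-generic points with $|E_n|\geq \exp(n(h_\mu-\eta))$. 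Fix also a $\nu$-generic point $y\in C$ and a rapidly growing sequence $(N_k)$ with $N_{k+1}/\sum_{j\leq k}N_j \to \infty$. For each choice $\xi=(x_1,x_3,x_5,\ldots)$ with $x_{2k+1}\in E_{N_{2k+1}}$, I concatenate the length-$N_{2k+1}$ orbit segment of $x_{2k+1}$ (type~$A$) with the length-$N_{2k+2}$ initial orbit segment of $y$ (type~$B$). Chain transitivity of $C$ provides short bridging arcs between consecutive blocks, so the whole sequence is a bona fide $\delta$-pseudo-orbit for any prescribed $\delta>0$. Shadowing then yields a true orbit of a point $z(\xi)\in X$ that $\eps/4$-traces this pseudo-orbit.

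Three properties are then checked: (i) each $z(\xi)\in I_\varphi(f)\subseteq I(f)$, because Birkhoff averages of $\varphi$ at the end of type~$A$ blocks approach $a$ while those at the end of type~$B$ blocks approach $b$, and the rapid growth of $(N_k)$ makes each new block dominate the time-average; (ii) the map $\xi\mapsto z(\xi)$ is injective on $(n,\eps/2)$-separated scales, since if $\xi,\xi'$ first differ in coordinate $2k+1$ then $x_{2k+1},x'_{2k+1}$ are $(N_{2k+1},\eps)$-separated and shadowing errors of size $\eps/4$ cannot close the gap; (iii) at time $T_k=\sum_{j\leq 2k+1}N_j$ the visible set has size at least $\prod_{j\leq k}|E_{N_{2j+1}}|\geq \exp((T_k-o(T_k))(h_\mu-\eta))$. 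Together these give $\htop(f,I(f))\geq h_\mu-\eta \geq \htop(f)-2\eta$, and $\eta\downarrow 0$ concludes case~(1). For case~(2), assume $\htop(f)=0$ and suppose $x\in I_\varphi(f)$. Extract subsequences $n_k,m_k$ along which the empirical measures of the orbit of $x$ converge weak$^*$ to measures with distinct $\varphi$-integrals; the corresponding initial orbit segments of $x$ furnish two types of finite building blocks. Concatenating them according to arbitrary binary patterns and shadowing exactly as above produces an $(n,\eps)$-separated set of exponential growth rate at least $\log 2$, contradicting $\htop(f)=0$.

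The hard part will be the injectivity-with-entropy estimate (ii)--(iii): I must choose the Katok separation $\eps$, the shadowing tracing constant $\delta$, and the block-length sequence $(N_k)$ in a tightly coordinated way so that shadowing errors accumulated over infinitely many glued blocks never close the $(n,\eps)$-gap between distinct shadows. A secondary subtlety is the bridging step requiring a chain-transitive ambient component containing both $\supp\mu$ and $\supp\nu$; this is the point at which shadowing must replace specification, and where the proof diverges from Thompson's treatment in the almost-specification setting.
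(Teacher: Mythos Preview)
Your plan for case~(1) is close in spirit to the paper's, but the block structure you chose destroys the Bowen entropy of the constructed set. You alternate a $\mu$-block of length $N_{2k+1}$ (where branching occurs) with a $\nu$-block of length $N_{2k+2}$ (the fixed orbit of $y$, no branching), and you require $N_{k+1}/\sum_{j\le k}N_j\to\infty$. The Bowen dimension entropy $\htop(f,\cdot)$ is defined through an \emph{infimum} over covers by Bowen balls $B_u(x,\eps)$ with $u\ge n$. An adversary can cover your set $\{z(\xi)\}$ by balls of length $u=T_{2k+2}=\sum_{j\le 2k+2}N_j$; the number of balls needed equals the number of branches up to that time, namely $\prod_{j\le k}|E_{N_{2j+1}}|\le\exp\bigl((h_\mu-\eta)\sum_{j\le k}N_{2j+1}\bigr)$. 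Since $\sum_{j\le k}N_{2j+1}/T_{2k+2}\to 0$, the covering sum $\sum e^{-tu}$ tends to $0$ for every $t>0$, whence $\htop(f,\{z(\xi)\})=0$. Your separated-set count (iii) only controls the \emph{upper capacity} entropy at the favourable times $T_{2k+1}$, not Bowen's dimension entropy. The paper avoids this by a two-scale construction: it uses fixed-length blocks of size $L$, and in \emph{every} group of $t+1$ consecutive blocks at least $t$ are $\mu$-blocks, so branching occurs at rate $\ge t/(t+1)$ uniformly in time; the rapidly growing outer scale is used only to make the empirical measures oscillate, not to insert long non-branching stretches. (A secondary point: your assertion that the chain component $C$ ``carries distinct ergodic measures'' is true but not immediate; the paper obtains a second measure $\nu$ with $\supp\nu\subset\supp\mu$ via an explicit horseshoe/proximal-subshift construction, which also supplies the transitivity needed for bridging.)

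Your argument for case~(2) fails for a different reason. The two ``building blocks'' you propose are \emph{initial orbit segments of the same point $x$}: they are nested, begin identically, and are not $(n,\eps)$-separated from one another. Concatenating them according to binary patterns therefore does not produce $(n,\eps)$-separated shadowing points; different words may even yield identical pseudo-orbits up to a time shift, and there is no mechanism to bridge back from $f^{n_k}(x)$ to $x$. Knowing that two blocks have different Birkhoff averages does not by itself produce metric separation of the shadows at a uniform scale. The paper's argument here is entirely different: if $x\in I(f)$, pick two distinct limit measures $\mu\neq\nu$ of $\mathcal E_n(x)$; if their supports differ one obtains a recurrent sensitive point, and shadowing plus sensitivity forces positive entropy; if the supports coincide and are minimal, zero entropy and shadowing force equicontinuity, hence unique ergodicity, contradicting $\mu\neq\nu$. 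No horseshoe is built in this half of the dichotomy.
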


The above result will be the main tool in further studies on generic dynamical systems on some compact manifolds.
Before presenting our second result, we need some notions and notations in the first place.

Throughout this paper, $M$  denotes a compact connected manifold admitting a decomposition in the following sense:
\begin{defn}
	\label{def:decomposition}
	A finite family $\cS$ of pairwise disjoint open subsets of $M$
	is called a \emph{decomposition} of $M$ if
	$M=\bigcup_{U\in \cS} \overline{U}$,
	and each $\overline{U}$ is homeomorphic to a closed ball in $\mathbb{R}^k$.
	If there exists a decomposition of $M$ then we say
	that $M$ \emph{admits a decomposition}.
\end{defn}
The class of manifolds admitting a decomposition includes all triangulable manifolds and manifolds with a handlebody.
Therefore, the considered class contains all compact manifolds
of dimension at most $3$ (see \cite{Moise,Bing}); of dimension at least $6$
(see \cite{Kirby-Siebenmann}) and all smooth manifolds
(see \cite{Cairns,Whitehead}).

Let $C(M)$ be the space of continuous maps on $M$ and $H(M)$ is the space of homeomorphisms on $M$. We consider these spaces with $C^0$-topology, that is we endow $C(M)$ with the metric
$$d_{C}(f,g)=\sup_{x\in M}d(f(x),g(x))$$
and $H(M)$ with the metric
$$d_H(f,g)=d_C(f,g)+d_C(f^{-1},g^{-1}).$$
It is well known that both spaces $(C(M),d_{C})$ and $ (H(M),d_{H})$ are complete.

A subset $\mathcal{R}$ of a metric space $X$ is \textit{residual} if it contains a countable intersection of dense
open sets.
A property $\mathcal{P}$ is called \textit{generic} in the space $X$ if there is a residual subset $\mathcal{R}$ of $X$ such that any $f\in \mathcal{R}$ satisfies $\mathcal{P}$.
%
Recent results show that in the class of manifolds with a decomposition shadowing property is generic (e.g. see \cite{Pilyugin-Plamenevskaya,KMO2014}).

Let us denote by
$$\E_{n}(x):=\frac{1}{n}\sum_{j=0}^{n-1}\delta_{f^{j}(x)}$$
the $n$-th empirical measure of $x\in X$, where $\delta_{y}$ is the Dirac measure at $y\in X$. The points $x$ such that $\mathcal{E}_n(x)$ converges in weak$^*$ topology are called \textit{quasi-regular points} (cf. \cite{DGS} or \cite{Walters}) and the set of all quasi-regular points is denoted by $Q(f)$.

For  $x\in X$, let $\pw(x)$ denote the set of all limit points of the sequence $\mathcal{E}_{n}(x)$ in $\mathcal{M}(X)$. A probability measure $\mu\in \mathcal{M}_f(M)$ is  \emph{SRB}    (or  \emph{physical}) if  the set
$$B (\mu)=\{x\in M\colon  \pw (x)=\{\mu\}\}$$
has positive Lebesgue measure 
and \textit{SRB-like} (alternatively,  \emph{observable} or  \emph{pseudo-physical}) if for any $\varepsilon>0$ the set
$$A_\varepsilon (\mu)=\{x\in M\colon  \rho(\pw (x),\mu)<\varepsilon\}$$
has positive Lebesgue measure.

Abdenur and Andersson showed in \cite{Abdenur-Andersson} that on manifolds with $\dim M\geq 2$, for $C^0$ generic maps $f\colon M\to M$ (the same for homeomorphisms), Lebesgue \textit{a.e.} $x\in M$ is quasi-regular, but $f$ admits no SRB measure.

However, for any topological dynamical system $f\colon M\to M$, the SRB-like measure always exists \cite{CE}. Moreover, let  $\mathcal{O}_{f}$ denote  the set of all SRB-like measures for $f$.
Define the basin $ \Delta$ of attraction of $\mathcal{O}_{f}$ by putting
 \begin{equation}
 \Delta=\{x: \pw(x)\subseteq \mathcal{O}_{f}\}.\label{def:delta}
 \end{equation}
It was proved in \cite[Theorem 1.5]{CE} that $\mathcal{O}_{f}$ is the smallest weak$^*$ compact subset of $\mathcal{M} (M)$
whose basin of attraction $\Delta$ satisfies $\Leb(\Delta)=1$, where $\Leb$ denotes the Lebesgue measure on $M$. Motivated by \cite{CE} and with Theorem~\ref{X-full} at hand
we prove the following.

\begin{thm}\label{main-thm}
For generic $f\in C(M)$ (the same for generic $f\in H(M)$) with $\dim M\geq 2$ the following conditions hold:
\begin{enumerate}

\item\label{main-thm:2} there is a set $\Lambda \subset M$ with $\Leb(\Lambda)=1$ such that
\begin{enumerate}[(i)]

  \item $\Lambda\subset Q(f)\cap \Delta$;
  \item $\htop(f,\Lambda)=0$;
\end{enumerate}
\item\label{main-thm:1} $\htop(f,I(f))=\infty$.
\end{enumerate}
\end{thm}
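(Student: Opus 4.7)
The strategy is to combine Theorem~\ref{X-full} with three known residual properties on $M$. Let $\mathcal{R}$ be a residual subset of $C(M)$ (respectively, $H(M)$) on which $f$ simultaneously enjoys: (a) the shadowing property (by \cite{Pilyugin-Plamenevskaya,KMO2014}); (b) infinite topological entropy, $\htop(f)=\infty$ (by Yano-type theorems on manifolds with $\dim M\geq 2$); and (c) Lebesgue-almost-everywhere quasi-regularity, $\Leb(Q(f))=1$ (by \cite{Abdenur-Andersson}). I fix $f\in\mathcal{R}$ throughout.

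Parts~(2) and~(1)(i) follow quickly. For~(2), since $f$ has the shadowing property and $\htop(f)=\infty>0$, Theorem~\ref{X-full} yields $\htop(f,I(f))=\htop(f)=\infty$. For~(1)(i), set $\Lambda:=Q(f)\cap\Delta$; then $\Leb(Q(f))=1$ by (c) and $\Leb(\Delta)=1$ by \cite[Theorem~1.5]{CE}, so $\Leb(\Lambda)=1$, while $\Lambda\subset Q(f)\cap\Delta$ by construction.

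Part~(1)(ii) is the hard step. For $x\in\Lambda$ one has $\pw(x)=\{\mu_{x}\}\subset\mathcal{O}_{f}$. The plan is to bound $\htop(f,\Lambda)$ from above by the supremum of metric entropies of measures in $\mathcal{O}_{f}$ and then to verify that this supremum vanishes for $f\in\mathcal{R}$. The upper bound should take the form of a Bowen-type variational inequality
$$\htop\bigl(f,\{x\in M:\pw(x)\subset K\}\bigr)\leq\sup_{\mu\in K}h_{\mu}(f)$$
for every weak-$*$ compact $K\subset\mathcal{M}_{f}(M)$, applied with $K=\mathcal{O}_{f}$; its proof should follow the Takens--Verbitskiy scheme \cite{Takens-Verbitskiy}, adapted to the shadowing setting via the same orbit-splicing techniques that drive Theorem~\ref{X-full}. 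The genuinely delicate ingredient is the zero-entropy statement that every $\mu\in\mathcal{O}_{f}$ satisfies $h_{\mu}(f)=0$ for $f\in\mathcal{R}$. I expect this to follow from a closer analysis of the perturbation scheme in \cite{Abdenur-Andersson}, which pushes Lebesgue-a.e.\ orbit into the basin of an attractor with trivial recurrent dynamics, thereby forcing the SRB-like measures onto zero-entropy carriers. Ruling out the possibility that Lebesgue measure sees a positive-entropy ergodic component hidden inside some SRB-like measure is the principal obstacle of the proof.
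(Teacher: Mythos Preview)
Your plan for part~(2) is correct and matches the paper. For part~(1), however, your approach diverges substantially from the paper's and leaves a genuine gap in (1)(ii).

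You set $\Lambda=Q(f)\cap\Delta$ and propose to bound $\htop(f,\Lambda)$ by $\sup_{\mu\in\mathcal{O}_f}h_\mu(f)$ via a Bowen-type inequality, then show this supremum vanishes. Two remarks. First, the upper bound $\htop(f,G_\mu)\leq h_\mu(f)$ is a general result of Bowen \cite{Bowen} requiring neither shadowing nor orbit-splicing, so invoking Takens--Verbitskiy techniques and Theorem~\ref{X-full} here is misplaced; whether it extends to an uncountable union over $\mu\in K$ is a separate issue you do not address. Second, and more seriously, you give no argument that every $\mu\in\mathcal{O}_f$ has $h_\mu(f)=0$. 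Since $\mathcal{O}_f$ is a weak-$*$ closure and metric entropy is not upper semicontinuous for $C^0$-generic maps (they have infinite entropy, hence are far from $h$-expansive), a limit of zero-entropy measures could have positive entropy; and if some ergodic $\nu\in\mathcal{O}_f$ had $h_\nu(f)>0$, its generic points would lie in your $\Lambda=Q(f)\cap\Delta$, forcing $\htop(f,\Lambda)\geq h_\nu(f)>0$. You correctly flag this as the ``principal obstacle'' but do not overcome it, and with your choice of $\Lambda$ it is not even clear the statement is true.

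The paper sidesteps the obstacle entirely by choosing a smaller $\Lambda$: not $Q(f)\cap\Delta$, but the explicit set $\Lambda=\bigcap_{n\geq1}\bigcup_{m\geq n}V_m$ built from the shredding construction (Lemma~\ref{shredding}), where each $V_m$ is a finite union of open pieces of diameter $<1/m$ permuted by $f$. Any forward orbit starting in $V_m$ stays in this finite family, so a single finite set is $(l,1/m)$-spanning for $V_m$ for \emph{every} $l$; feeding this directly into the definition of Bowen entropy yields $\htop(f,\Lambda)=0$ by an elementary estimate, with no analysis of $h_\mu$ for SRB-like measures. The inclusion $\Lambda\subset Q(f)\cap\Delta$ is verified separately from the same shredding structure.
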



\section{Preliminaries}\label{pre}
\subsection{Notions and notations}\label{notions-and-notations}
Throughout this paper, we denote by $\N,\Z,\N_0$ the set of natural numbers, integers, and nonnegative integers respectively. A \textit{dynamical system} is a pair $(X,f)$ consisting of a compact metric space $(X,d)$ and a continuous map $f\colon X\to X$.
If for every pair of non-empty open sets $U,V$ there is an integer $n$ such that $f^n(U)\cap V\neq \emptyset$ then $(X,f)$ is \textit{transitive}.
When for every pair of non-empty open sets $U,V$ there exists an integer $N$ such that $f^n(U)\cap V\neq \emptyset$ for every $n>N$, then $(X,f)$ is \textit{topologically mixing}.

We denote the diameter of any subset $A\subset X$ by
$$\diam(A)=\sup_{x,y\in A}\{d(x,y)\}.$$
An open ball, centered at $x\in X$ and with radius $r>0$ is denoted by $B(x,r)$. For any $n\in \mathbb{N}$, the $d_n$-distance between $x,y\in X$ is defined as
$$d_n(x,y):=\max_{0\leq i\leq n-1}\{d(f^{i}(x),f^{i}(y))\}.$$
The $(n,\eps)$-Bowen ball centered at $x$ is
$$B_n(x,\eps):=\{y\in X:d_n(x,y)<\eps\}.$$

\subsection{Invariant measures}
We denote by $\mathcal{M}(X)$ the set of Borel probability measures on $X$.
Let ${\{\varphi_{j}\}}_{j\in\mathbb{N}}$ be a dense subset of $C(X,\mathbb{R})$, then
$$\rho(\xi,\tau)=\sum_{j=1}^{\infty}\frac{|\int\varphi_{j}d\xi-\int\varphi_{j}d\tau|}{2^{j}\|\varphi_{j}\|}$$
defines a metric on $\mathcal{M}(X)$ for the $weak^{*}$ topology  \cite{Walters}, where
$$\|\varphi_i\|=\max\{|\varphi_i(x)|:x\in X\}.$$
Note that
\begin{equation}\label{diameter-of-Borel-pro-meas}
\rho(\xi,\tau)\leq2~~\textrm{for any}~~\xi,\tau\in \mathcal{M}(X).
\end{equation}
Note also that the natural embedding $j:x\mapsto \delta_x$ is continuous and injective. 
Moreover, $(X,f)$ is compact and $\M(X)$ is Hausdorff, so $X$ is homeomorphic to its image $j(X)$. Thus without loss of generality,
we can choose the following equivalent metric $d$ on $X$ putting $d(x,y)=\rho(\delta_x,\delta_y)$.
This combined with \eqref{diameter-of-Borel-pro-meas} gives
\begin{lem}\label{lem:prohorov}
	Let $(X,f)$ be a dynamical system and let $x\in X$.
	\begin{enumerate}
		\item\label{lem:prohorov:1} Let $0\leq k< m <n$ and $x\in X$. Then
		$$\rho(\E_m(x),\E_n(f^k(x)))\leq \frac{2}{n}(n-m+k),$$
		\item\label{lem:prohorov:2} Given $\eps > 0$ and $p\in \N$, for every $y  \in B_p(x,\eps)$ we have $\rho(\E_p(y), \E_p(x)) <\eps$.
		\item\label{lem:prohorov:3} Given $\eps > 0$ and $p,q \in \N$ satisfying $p \leq q \leq (1+\eps/2)p$, for every $y  \in B_p(x,\eps)$ we have $\rho(\E_q(y), \E_p(x)) <2\eps$.
	\end{enumerate}
\end{lem}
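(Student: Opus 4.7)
The plan is to establish the three items essentially independently, with (3) reducing to a direct triangle-inequality combination of (1) and (2). Throughout I will use two facts immediate from the definition of $\rho$: first, $\rho$ is jointly convex in each slot, since $\int\varphi\,d(\cdot)$ is linear; and second, the author's chosen metric satisfies $d(x,y)=\rho(\delta_x,\delta_y)$.

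For (\ref{lem:prohorov:1}) I would expand $\E_m(x)$ and $\E_n(f^k(x))$ as weighted sums of Dirac masses along the orbit of $x$. Since $0\leq k<m<n$, the index ranges $[0,m-1]$ and $[k,n+k-1]$ overlap precisely on $[k,m-1]$, so the signed measure $\E_m(x)-\E_n(f^k(x))$ is supported on three disjoint index intervals with coefficients $1/m$, $1/m-1/n$, and $-1/n$ respectively. Integrating against any $\varphi$ with $\|\varphi\|\leq 1$ and summing the absolute weights yields
\[
\Bigl|\int\varphi\,d\E_m(x)-\int\varphi\,d\E_n(f^k(x))\Bigr|\leq \frac{k}{m}+(m-k)\left(\frac{1}{m}-\frac{1}{n}\right)+\frac{n+k-m}{n}=\frac{2(n-m+k)}{n};
\]
plugging this into the definition of $\rho$ and using $\sum_{j\geq 1}2^{-j}=1$ finishes the bound.

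Item (\ref{lem:prohorov:2}) would follow in one line from convexity of $\rho$ combined with the metric choice:
\[
\rho(\E_p(y),\E_p(x))\leq \frac{1}{p}\sum_{i=0}^{p-1}\rho(\delta_{f^i(y)},\delta_{f^i(x)})=\frac{1}{p}\sum_{i=0}^{p-1}d(f^i(y),f^i(x))<\eps,
\]
where the final strict inequality uses $y\in B_p(x,\eps)$. For (\ref{lem:prohorov:3}) I would then split via $\rho(\E_q(y),\E_p(x))\leq \rho(\E_q(y),\E_p(y))+\rho(\E_p(y),\E_p(x))$, bound the second term by $\eps$ via (\ref{lem:prohorov:2}), and apply (\ref{lem:prohorov:1}) with $k=0$, $m=p$, $n=q$ to get $\rho(\E_q(y),\E_p(y))\leq 2(q-p)/q$; the hypothesis $q\leq (1+\eps/2)p$ gives $q-p\leq \eps q/2$, so the first term is also at most $\eps$ and the total is $<2\eps$. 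The only mildly delicate step is the coefficient accounting in (\ref{lem:prohorov:1}), which is routine but easy to miscount if the overlapping index intervals are not tracked carefully; everything else is formal.
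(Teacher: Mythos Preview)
Your proof is correct and is precisely the computation the paper has in mind: the paper does not give a proof of this lemma at all, merely indicating that it follows from the diameter bound $\rho\leq 2$ and the choice $d(x,y)=\rho(\delta_x,\delta_y)$, and your argument is the natural way to unpack that remark. The only cosmetic point is that in part~(3) the appeal to part~(1) formally requires $p<q$; when $p=q$ the first term vanishes, so the case split is trivial.
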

We say that $\mu\in \M(X)$ is $f$-invariant if for any measurable set $E\subset X$, we have
$$\mu(E)=\mu(f^{-1}(E))$$
that is, $\mu$ is a fixed point of $T_f$, where $T_f:\M(X)\to \M(X)$ is defined as $T_f(\mu)=\mu\circ f^{-1}$.
The set of $f$-invariant measures in $\mathcal{M}(X)$ is denoted by $\mathcal{M}_f(X)$.
A dynamical system $(X,f)$ is \textit{uniquely ergodic} when $\mathcal{M}_f(X)$ is a singleton.

\subsection{Shadowing property}

A sequence $\{x_n\}_{n\in\mathbb{N}}\subset X$ is called a \textit{$\delta$-pseudo-orbit} of $f$ if
$$d(f(x_n),x_{n+1})<\delta~~\textrm{for any}~~n\in\mathbb{N}.$$
A $\delta$-pseudo-orbit $\{x_n\}_{n\in\mathbb{N}}$ is \textit{$\eps$-shadowed} by an orbit of $y\in X$ if
$$d(f^n(y),x_n)<\eps~~\textrm{for any}~~n\in\mathbb{N}.$$
We say that $f\colon X\to X$ has the \textit{shadowing property} if for any $\eps>0$, there exists  $\delta>0$ such that any $\delta$-pseudo-orbit is $\eps$-shadowed by some orbit.

\subsection{Topological entropy}\label{topological-entropy}
The topological entropy of dynamical systems $(X,f)$ was first defined by Adler, Konheim and McAndrew in \cite{AKM} using open covers. Later on, Bowen introduced separated and spanning sets to reformulate this notion \cite{BowenEntropy}.

Let $Z\subset M$. A set $S$ is $(n,\eps)$-separated for $Z$ if $S\subset Z$ and $d_n(x,y)>\eps$ for any distinct points $x,y\in S$.
A set $S\subset Z$ if $(n,\eps)$-spanning for $Z$ if for any $x\in Z$, there exists $y\in S$ such that $d_n(x,y)\leq \eps$.

Define
\begin{eqnarray*}
	s_n(Z,\eps) &=& \sup ~\{|S|:S ~\textrm{is}~ (n,\eps)-\textrm{separated for}~ Z\}, \\
	r_n(Z,\eps) &=& \inf ~\{|S|:S ~\textrm{is}~ (n,\eps)-\textrm{spanning for} ~Z\},
\end{eqnarray*}
where $|S|$ denotes the cardinality of $S$. It is not hard to check that (e.g see \cite{Walters}):
\begin{equation}\label{inequality-of-rn-and-sn}
r_n(Z,\eps)\leq s_n(Z,\eps)\leq r_n(Z,\eps/2).
\end{equation}
Following Bowen, we define \emph{topological entropy} for a compact set $K\subset X$ by
$$h_d(f,K)=\lim_{\eps\to0^+}\limsup_{n\to\infty}\frac1n\log s_n(K,\eps)=\lim_{\eps\to0^+}\limsup_{n\to\infty}\frac1n\log r_n(K,\eps).$$

Bowen also introduced topological entropy for non-compact set using notation similar to Hausdorff dimension \cite{Bowen}.
Let $E\subset X$, and $\mathcal {G}_{n}(E,\eps)$ be the collection of all finite or countable covers of $E$ by sets of the form $B_{u}(x,\eps)$ with $u\geq n$. Put
$$C(E;t,n,\eps,f):=\inf_{\mathcal {C}\in \mathcal {G}_{n}(E,\eps)}\sum_{B_{u}(x,\eps)\in \mathcal {C}}e^{-tu}$$
and
$$C(E;t,\eps,f):=\lim_{n\rightarrow\infty}C(E;t,n,\eps,f).$$
Then we define
$$\htop(E;\eps,f):=\inf\{t:C(E;t,\eps,f)=0\}=\sup\{t:C(E;t,\eps,f)=\infty\}$$
The \textit{Bowen topological entropy} of $E$ is
\begin{equation}\label{definition-of-topological-entropy}
  \htop(f,E):=\lim_{\eps\rightarrow 0^+} \htop(E;\eps,f).
\end{equation}

It is known that value of $h_d$ is independent of the choice of metric $d$ (it depends only on topology on $X$) and
for every compact $f$-invariant subset $E\subset X$, we have $\htop(f,E)=h_d(f,E)$, see \cite{Pesin}.

\subsection{Symbolic dynamics}
Fix any positive integer $k\geq 2$ and consider the following set
$\Sigma_k^+=\{0,1,\dotsc,k-1\}^{\N_0}$ with the product topology induced by the discrete topology on $\{0,1,\dotsc,k-1\}$.
The space $\Sigma_k^+$ is always endowed with the \textit{shift map} $\sigma$ defined by $\sigma(x)_i=x_{i+1}$ for every integer $i\geq 0$.
It is not hard to verify that $\sigma$ is continuous and $\Sigma_k^+$ is a compact metrizable space. We endow it with the (compatible) metric
defined by
$d(x,y)=2^{-k}$ when $x\neq y$ and $k=\min\{i : x_i\neq y_i\}$. Dynamical system $(\Sigma_k^+,\sigma)$ or simply $\Sigma_k^+$ for short,
is called \textit{full shift}. By \textit{subshift} or \textit{shift} we mean any compact and $\sigma$-invariant subset of $\Sigma_k^+$.

For $i\leq j$ and any $x\in \Sigma_k^+$ we write $x_{[i,j]}=x_i\ldots x_j$ and similarly $x_{[i,j)}=x_{[i,j-1]}$. For any $n>0$ and any subshift $X$ we denote
$$
B_n(X)=\{x_{[i,i+n)} : x\in X, i\geq 0\}.
$$
It is well known that in the case of subshifts, formula for topological entropy reduces to
$$
\htop(\sigma,X)=\lim_{n\to\infty}\frac{1}{n}\log |B_n|.
$$
The reader not familiar with symbolic dynamics is referred to \cite{LM} for more details.

\section{Some auxiliary lemmas}\label{section-lemmas}
We start this section with the following simple observation, relating quasi-regular points with irregular set.
\begin{lem}\label{lem:quasi}
Let $(X,f)$ be a dynamical system. Then
$Q(f)=X\setminus I(f)$.
\end{lem}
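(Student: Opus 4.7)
The proof is a direct unpacking of the definitions, linked by the fact that weak$^{*}$ convergence of probability measures is equivalent to convergence of integrals against every continuous test function.

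For the inclusion $Q(f)\subseteq X\setminus I(f)$, suppose $x\in Q(f)$, so $\E_n(x)\to \mu$ in the weak$^{*}$ topology for some $\mu\in\M(X)$. For any $\varphi\in C(X,\R)$ we have
$$\frac{1}{n}\sum_{i=0}^{n-1}\varphi(f^i(x))=\int\varphi\,d\E_n(x)\longrightarrow \int\varphi\,d\mu,$$
so the Birkhoff average of $\varphi$ at $x$ converges, meaning $x\notin I_\varphi(f)$. Since $\varphi$ was arbitrary, $x\notin I(f)$.

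For the reverse inclusion $X\setminus I(f)\subseteq Q(f)$, fix $x\notin I(f)$. Because $\M(X)$ is compact and metrizable in the weak$^{*}$ topology, the sequence $\E_n(x)$ has at least one limit point. Suppose $\mu$ and $\nu$ are both limit points, with $\E_{n_k}(x)\to\mu$ and $\E_{m_k}(x)\to\nu$ along suitable subsequences. For any $\varphi\in C(X,\R)$, since $x\notin I_\varphi(f)$, the full sequence $\int\varphi\,d\E_n(x)=\frac{1}{n}\sum_{i=0}^{n-1}\varphi(f^i(x))$ converges, so its two subsequential limits agree:
$$\int\varphi\,d\mu=\lim_{k\to\infty}\int\varphi\,d\E_{n_k}(x)=\lim_{k\to\infty}\int\varphi\,d\E_{m_k}(x)=\int\varphi\,d\nu.$$
As this holds for every $\varphi\in C(X,\R)$, we conclude $\mu=\nu$. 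Hence $\E_n(x)$ has a unique weak$^{*}$ limit point in the compact space $\M(X)$, and therefore converges, i.e. $x\in Q(f)$.

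There is no real obstacle here; the only thing to be careful about is invoking metrizability of $(\M(X), \mathrm{weak}^{*})$ so that having a unique limit point forces convergence of the whole sequence. This is standard and is exactly the setting fixed in Section~\ref{pre} via the metric $\rho$.
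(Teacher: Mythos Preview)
Your proof is correct. The forward inclusion $Q(f)\subseteq X\setminus I(f)$ is handled exactly as in the paper. For the reverse inclusion, however, the paper takes a different route: rather than using sequential compactness of $\M(X)$ and uniqueness of subsequential limits, it defines the functional $L(\varphi)=\lim_{n\to\infty}\frac{1}{n}\sum_{i=0}^{n-1}\varphi(f^{i}(x))$, checks that $L$ is a positive continuous linear functional on $C(X,\R)$ with $L(1)=1$, and invokes the Riesz representation theorem to produce a measure $\mu$ with $\int\varphi\,d\mu=L(\varphi)$ for all $\varphi$, which is exactly weak$^{*}$ convergence $\E_n(x)\to\mu$. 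Your argument trades the Riesz theorem for compactness and metrizability of $\M(X)$, which is perhaps slightly more self-contained given that the metric $\rho$ on $\M(X)$ is already set up in Section~\ref{pre}; the paper's argument is more constructive in that it identifies the limit measure directly without passing through subsequences.
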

\begin{proof}
Let $x\in Q(f)$ be a quasi-regular point for a measure $\mu\in \M(X)$.
	Observe that $\lim_{n\to \infty} \E_n(x)=\mu$ in a weak$^*$ topology if
	$$
	\lim_{n\to \infty} \frac{1}{n}\sum_{i=0}^{n-1} \varphi(f^i(x))=\lim_{n\to \infty}\int \varphi d\E_n(x)=\int \varphi d\mu
	$$
	for every $\varphi \in C(X, \R)$, which means $x\not\in I(f)$.
	On the other hand, if $x\not\in I(f)$, then for every $\varphi \in C(X,\R)$ the number $L_f(\varphi)=\lim_{n\to\infty} \frac{1}{n}\sum_{i=0}^{n-1} \varphi(f^i(x))$ is well defined.
	Clearly $L\colon C(X,\R)\to \R$ is positive operator, $L(1)=1$ and $L(\varphi-\psi)\leq  \sup_{x\in X}|\varphi(x)-\psi(x)|$ which shows that $L$ is continuous.
	Then by Riesz representation theorem there is $\mu \in M(X)$ such that $L(\varphi)=\int \varphi d\mu$ and so $x\in Q(f)$.
\end{proof}

In our constructions we will need the following combinatorial lemma which allows us to construct large separated sets.
\begin{lem}\label{useful-proposition}
	Let $(X,d)$ be a compact metric space and let measure $\mu\in\mathcal{M}_f(X)$ be ergodic with $h_\mu(f)>0$. Then for any $\eta>0$, there exists $\eps>0$ such that for each neighborhood $F$ of $\mu$ in $\M(M)$, there exists $n_F\in\mathbb{N}$ such that for any $n\geq n_F$, there exists $\Gamma_n\subset X_{n,F}\cap \supp(\mu)$ which is $(n,\eps)$-separated and satisfies $|\Gamma_n|\geq e^{n(h_{\mu}(f)-\eta)}$, where $X_{n,F}:=\{x\in X:\En(x)\in F\}$.
\end{lem}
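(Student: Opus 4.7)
My plan is to combine two classical ingredients. Katok's entropy formula will produce, at a scale depending only on $\eta$, exponentially many $(n,\eps)$-separated points inside any set of $\mu$-measure at least $1/2$; Birkhoff's ergodic theorem will then provide, for each neighborhood $F$ of $\mu$, a concrete such set sitting inside $X_{n,F}\cap\supp(\mu)$ for all $n$ large enough. The subtle point is the order of quantifiers: the statement demands one $\eps$ that works for \emph{every} $F$, so $\eps$ must be chosen before $F$ enters the picture.

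\emph{Step 1 (fix $\eps$ via Katok).} Since $\mu$ is ergodic with $h_{\mu}(f)>0$, a standard consequence of Katok's entropy formula (applied with $\delta=1/2$) provides $\eps>0$ and $N\in\N$ such that for every $n\geq N$ and every Borel set $A\subset X$ with $\mu(A)\geq 1/2$ there is an $(n,\eps)$-separated subset of $A$ of cardinality at least $e^{n(h_{\mu}(f)-\eta)}$. Indeed, any maximal $(n,\eps)$-separated subset of $A$ is $(n,\eps)$-spanning for $A$, and Katok bounds from below the number of Bowen balls needed to cover any set of $\mu$-measure $\geq 1/2$. Note that this $\eps$ depends only on $\eta$ and $\mu$, not on $F$.

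\emph{Step 2 (produce an $F$-generic set via Birkhoff; combine).} Fix a countable dense family $\{\varphi_j\}\subset C(X,\R)$. Applying Birkhoff's theorem to each $\varphi_j$ and intersecting the resulting full-measure sets with $\supp(\mu)$ yields $\Omega\subset\supp(\mu)$ with $\mu(\Omega)=1$ on which $\E_n(x)\to\mu$ in the weak$^*$ topology. For the given $F$, the sets
\[ \Omega_k=\{x\in\Omega:\E_n(x)\in F \text{ for all } n\geq k\} \]
increase to $\Omega$ as $k\to\infty$, so continuity of $\mu$ yields $k^*=k^*(F)$ with $\mu(\Omega_{k^*})\geq 1/2$. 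Put $n_F:=\max(N,k^*)$. For every $n\geq n_F$, Step 1 applied to $A=\Omega_{k^*}$ furnishes an $(n,\eps)$-separated set $\Gamma_n\subset\Omega_{k^*}$ with $|\Gamma_n|\geq e^{n(h_{\mu}(f)-\eta)}$. Since $n\geq k^*$, every $x\in\Omega_{k^*}$ satisfies $\E_n(x)\in F$, i.e.\ $\Omega_{k^*}\subset X_{n,F}$; combined with $\Omega_{k^*}\subset\supp(\mu)$ this gives $\Gamma_n\subset X_{n,F}\cap\supp(\mu)$, as required.

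The main obstacle is keeping the scale $\eps$ independent of the neighborhood $F$, and Katok's formula is precisely the tool that delivers this, because the choice of $\eps$ depends only on $\eta$ (and $\mu$) and not on the specific set of nearly full measure to which it is applied. Once $\eps$ is fixed, Birkhoff contributes only the threshold $n_F$ after which $\mu$-generic points have their empirical measures in $F$. Note that ergodicity is used both in Birkhoff (to identify the weak$^*$ limit as $\mu$) and inside Katok's formula itself, while the shadowing hypothesis plays no role here, consistent with this lemma being a purely measure-theoretic preliminary.
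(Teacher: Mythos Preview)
Your proof is correct and follows essentially the same approach as the paper: both use Katok's entropy formula to fix $\eps$ depending only on $\eta$ (the paper with a generic $\delta$, you with $\delta=1/2$), then invoke ergodicity to obtain an increasing exhaustion of the generic-point set by the sets $\Omega_k$ (the paper's $D_n$), pick $k^*$ so that $\mu(\Omega_{k^*})$ exceeds the Katok threshold, and finally pass from spanning to separated sets via the standard inequality $s_n\geq r_n$. The only cosmetic difference is that you state Step~1 as a uniform statement over all sets of measure $\geq 1/2$, whereas the paper applies the same reasoning directly to $Z=X_{n,F}\cap\supp(\mu)$.
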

\begin{proof}
	Fix any ergodic measure $\mu\in \M_f(X)$. For any choice of $n\geq 0$ and $\eps,\delta>0$,
let $N_f(n,\eps,\delta)$ denote the minimal number of $(n,\eps)$-Bowen balls which cover the set of $\mu$-measure at least $1-\delta$. Using the above notation, Katok proved in \cite[Theorem~1.1]{Katok} the following generalization of Bowen's formula for topological entropy:
	
\begin{equation}\label{Katok-entropy-formula}
	h_{\mu}(f)=\lim_{\eps\rightarrow 0^+}\liminf_{n\rightarrow \infty}\frac{\ln N_f(n,\eps,\delta)}{n},
	\end{equation}

Fix any $\eta>0$ and let $\eps>0$ be such that
\begin{equation}\label{Katok-entropy-formula:2}
	h_{\mu}(f)-\eta< \liminf_{n\rightarrow \infty}\frac{\ln N_f(n,\eps,\delta)}{n}.
	\end{equation}
Fix any open neighborhood $F$ of $\mu$ in $\M(X)$. Since $\mu$ is ergodic, the set $G_\mu$ of points generic for $\mu$ has full $\mu$-measure and by definition for every generic point $x$ we have $\lim_{n\to\infty}\En(x)=\mu$.
Note that $G_\mu \subset \bigcup_n D_n$ where
$$D_n=\set{x\in G_\mu : \E_k(x)\in F~\text{ for all}~k\geq n}.$$
Note that $D_{n}\subset D_{n+1}$ and $D_n\subset X_{n,F}$, hence
	there exists $N_1\in\N$ such that
	$\mu(X_{n,F}\cap \supp(\mu))\geq1-\delta$ for every $n>N_1$.
	
By \eqref{Katok-entropy-formula:2} there exists $n_F\geq N_1$ such that for any $n\geq n_F$ we have:
	\begin{equation}\label{Katok-spanning-set}
	N_f(n,\eps,\delta)\geq e^{n(h_{\mu}(f)-\eta)}.
	\end{equation}
	
	Denote $Z=X_{n,F}\cap \supp(\mu)$ and observe that \eqref{Katok-spanning-set} implies that
	$$r_n(Z,\eps)\geq e^{n(h_{\mu}(f)-\eta)}.$$
	
	By the inequality \eqref{inequality-of-rn-and-sn}, there exists $(n,\eps)$-separated set $\Gamma_n\subset Z$ which satisfies
	$$|\Gamma_n|\geq e^{n(h_{\mu}(f)-\eta)}.$$
	The proof is completed.
\end{proof}

\begin{lem}\label{two-measures}
	Let $(X,f)$ be a dynamical system.
	Fix any $\alpha>0$ and assume that there are $m,k\in \N$ with $\log (m)/k>\alpha$ and a closed set $\Lambda\subset X$
	invariant under $f^k$ such that there is a factor map $\pi\colon (\Lambda, f^k)\to (\Sigma_{m+1}^+,\sigma)$.
	Then there exists an ergodic measure $\mu$ such that
	\begin{equation}\label{estimation-of-entropy-of-mu}
	h_{\mu}(f)>\alpha
	\end{equation}
	and there is an ergodic measure $\nu$ such that $\mu\neq \nu$ and $\supp(\nu)\subset \supp(\mu)$.
\end{lem}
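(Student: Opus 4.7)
The plan is to produce $\mu$ as the $f$-symmetrization of an $f^k$-ergodic lift of the uniform Bernoulli measure on $\Sigma_{m+1}^+$ (which automatically delivers the entropy bound), and to obtain $\nu$ by exploiting the observation that there are uncountably many alternative $f^k$-ergodic lifts available inside $\supp(\tilde\mu_1)$, while only finitely many of them can collapse to $\mu$ after symmetrization.

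\emph{Construction of $\mu$.} Let $\beta\in \M_\sigma(\Sigma_{m+1}^+)$ denote the uniform Bernoulli measure; it is $\sigma$-ergodic, has full support, and satisfies $h_\beta(\sigma)=\log(m+1)>k\alpha$. A standard lifting argument through the factor map $\pi$ produces an $f^k$-invariant measure on $\Lambda$ projecting to $\beta$; because $\beta$ is $\sigma$-ergodic, every $f^k$-ergodic component of this lift still projects onto $\beta$, and I fix one such ergodic component $\tilde\mu_1$. Then $h_{\tilde\mu_1}(f^k)\ge h_\beta(\sigma)>k\alpha$ by the factor-map entropy inequality, and $\pi_*\tilde\mu_1=\beta$ together with the compactness of $\supp(\tilde\mu_1)$ forces $\pi(\supp(\tilde\mu_1))=\supp(\beta)=\Sigma_{m+1}^+$. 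Set $\mu:=\frac1k\sum_{i=0}^{k-1}f^i_*\tilde\mu_1$. For any $f$-invariant Borel set $A$ we have $f^i_*\tilde\mu_1(A)=\tilde\mu_1(f^{-i}(A))=\tilde\mu_1(A)\in\{0,1\}$ by $f^k$-ergodicity of $\tilde\mu_1$, hence $\mu(A)\in\{0,1\}$ and $\mu$ is $f$-ergodic. The factor-map entropy inequality applied to the commuting maps $f^j$ and $f^{k-j}$ on both sides gives $h_{f^j_*\tilde\mu_1}(f^k)=h_{\tilde\mu_1}(f^k)$ for every $j$, so by affinity $h_\mu(f^k)=h_{\tilde\mu_1}(f^k)$, and therefore $h_\mu(f)=\tfrac1k h_{\tilde\mu_1}(f^k)>\alpha$, giving \eqref{estimation-of-entropy-of-mu}.

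\emph{Construction of $\nu$.} The restriction $\pi|_{\supp(\tilde\mu_1)}$ is still a factor map $(\supp(\tilde\mu_1),f^k)\to(\Sigma_{m+1}^+,\sigma)$. Since $(\Sigma_{m+1}^+,\sigma)$ carries uncountably many pairwise distinct $\sigma$-ergodic measures --- for instance the Bernoulli measures indexed by the open $m$-simplex of strictly positive probability vectors on $\{0,1,\ldots,m\}$ --- lifting them through $\pi|_{\supp(\tilde\mu_1)}$ and passing to $f^k$-ergodic components produces an uncountable family of pairwise distinct $f^k$-ergodic measures on $\supp(\tilde\mu_1)$ (distinct because they have pairwise distinct $\pi$-pushforwards). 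On the other hand, the $f_*$-orbit $\{f^j_*\tilde\mu_1:0\le j\le k-1\}$ contains at most $k$ elements, so I may choose some $\tilde\mu_2$ from the uncountable family with $\tilde\mu_2\notin\{f^j_*\tilde\mu_1:0\le j\le k-1\}$. Setting $\nu:=\frac1k\sum_{i=0}^{k-1}f^i_*\tilde\mu_2$, the same argument as for $\mu$ shows $\nu$ is $f$-ergodic, and $\supp(\tilde\mu_2)\subseteq\supp(\tilde\mu_1)$ immediately gives $\supp(\nu)=\bigcup_i f^i(\supp(\tilde\mu_2))\subseteq\bigcup_i f^i(\supp(\tilde\mu_1))=\supp(\mu)$.

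\emph{Separation of $\mu$ and $\nu$.} If $\mu=\nu$, then the uniqueness of the $f^k$-ergodic decomposition applied to this common $f^k$-invariant measure forces the multisets $\{f^i_*\tilde\mu_1\}_{i=0}^{k-1}$ and $\{f^i_*\tilde\mu_2\}_{i=0}^{k-1}$ of $f^k$-ergodic components to coincide, so that $\tilde\mu_2=f^j_*\tilde\mu_1$ for some $j\in\{0,\ldots,k-1\}$, contradicting the choice of $\tilde\mu_2$. The subtle point this pigeonhole avoids --- and what I expect to be the main obstacle to a more direct argument --- is that the intertwining relation $\pi\circ f^k=\sigma\circ\pi$ does not extend to $f^j$ for $j$ not a multiple of $k$, so one cannot separate $\mu$ and $\nu$ by simply comparing $\pi$-pushforwards of the relevant measures, nor does one have any control over how $f^j$ moves $\Lambda$ around inside $X$; exploiting the mismatch between a finite $f_*$-orbit and an uncountable lifted family sidesteps all such geometric information.
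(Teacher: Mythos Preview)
Your proof is correct and takes a genuinely different route from the paper's.

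The paper builds an auxiliary proximal subshift inside $\Sigma_{m+1}^+$: it carves out a closed shift-invariant set $X_0\subset\Sigma_{m+1}^+$ whose only minimal subsystem is the fixed point $0^\infty$ and whose entropy is at least $(1-\gamma)\log m$. Pulling back by $\pi$, spreading under $f$ to $Z=\bigcup_{i=0}^{k-1}f^i(\pi^{-1}(X_0))$, and then collapsing the $f$-invariant set $D$ generated by $\pi^{-1}(0^\infty)$ to a point produces a proximal system $(Y,g)$ with $\htop(g)>\alpha$. The measure $\mu$ is then obtained from the variational principle on $(Y,g)$ and transported back to $Z$ via the fact that the collapsing map is injective off $D$; the measure $\nu$ is any ergodic measure on a minimal subset of $D\cap\supp(\mu)$.

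Your argument bypasses the proximal construction and the quotient step entirely: you lift the uniform Bernoulli measure through $\pi$, pass to an $f^k$-ergodic component $\tilde\mu_1$, and average along the $f$-orbit to get $\mu$. Then, noting that $\pi|_{\supp(\tilde\mu_1)}$ is still surjective onto $\Sigma_{m+1}^+$, you lift an uncountable family of Bernoulli measures into $\supp(\tilde\mu_1)$ and use a pigeonhole (uncountable family versus the finite set $\{f_*^j\tilde\mu_1\}$) to select $\tilde\mu_2$ whose symmetrization $\nu$ is forced to differ from $\mu$ by uniqueness of the $f^k$-ergodic decomposition.

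Your route is shorter and purely measure-theoretic; it also only uses the weaker inequality $\log(m+1)>k\alpha$. What the paper's approach buys in exchange is extra topological information: its $\nu$ lives on a minimal subset of $\supp(\mu)$, and the proximal-subshift mechanism is a reusable device. For the application in the paper (Corollary~\ref{cor:measures_entropy} and Theorem~\ref{thm:shadowingpte}, which only need that $f|_{\supp(\mu)}$ is not uniquely ergodic), your version is entirely sufficient.
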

\begin{proof}	
	Recall that dynamical system $(X,f)$ is proximal, if
$$\liminf_{n\to\infty} d(f^n(x),f^n(y))=0$$ for every $x,y\in X$. Equivalently, it means that the only minimal subset of $X$ is the unique fixed point in $X$ (e.g. see \cite{Akin}).
	
	Take any $\gamma>0$ such that $(1-\gamma)\log(m)/k>\alpha$.
	Fix any increasing sequences of positive integers $k_n,s_n$ such that $s_n$ divides $s_{n+1}$, $k_{n+1}<s_n$ and
	$$
	\sum_{i=1}^\infty \frac{k_i}{s_i}< \gamma.
	$$
Now, let $A$ be a set consisting of points $x\in \Sigma_{m+1}^+$  such that
	$x_i=0$ when  $i (\text{mod }s_n)\geq (s_n-k_n)$ and $x_i \in \set{1,\ldots,m}$
	otherwise. Denote $X=\overline{\bigcup_{i=0}^\infty\sigma^i(A)}$. Since every element of $A$ has syndetic occurrence of block $0^{k_n}$ (next such block occurs after at most $2s_n$ positions), also every element of $X$ has this property. It immediately implies that $0^\infty \in \omega(x)$ for every $x\in X$, hence $\set{0^\infty}$ is the unique minimal subset of $X$. Therefore dynamical system $(X,\sigma)$ is proximal.
	Let us estimate its entropy.
	\begin{eqnarray*}
		\frac{1}{s_n}\log |B_{s_n}(X)|&\geq & \frac{1}{s_n}\log |\set{ w : w=x_{[0,s_n)]}, x\in A}|\\
		&\geq& \frac{1}{s_n} \log m^{(s_n - \sum_{i=1}^n \frac{k_i s_n}{s_i})}\\
		&\geq & (1-\sum_{i=1}^\infty \frac{k_i}{s_i})\log (m)\geq (1-\gamma)\log(m).
	\end{eqnarray*}
	This shows that $\htop(X)\geq (1-\gamma)\log(m)$.
	
	Denote $\Gamma=\pi^{-1}(X)$ and let $Z=\bigcup_{i=0}^\infty f^i(\Gamma)=  \bigcup_{i=0}^{k-1}f^i(\Gamma)$. Then
	$$
	\htop(f|_Z)\geq \frac{1}{k}\htop(f^k|_{\Gamma})\geq (1-\gamma)\log(m)/k>\alpha.
	$$
	Denote by $D$ the smallest invariant subset for $f$ containing the set $\pi^{-1}(0^\infty)$.
	Then clearly $D\subset Z$ and every minimal system $E$ in $Z$ must be a subsystem of $D$. Simply $E\cap Z\neq \emptyset$ and  $E\cap Z$ is a union of minimal sets for $f^k$. But then the only possibility is that $\pi(E\cap Z)=0^\infty$ showing that $E\cap Z\subset D$, and since $D$ is invariant we have $E\subset D$.
	
	Let $(Y,g)$ be a system obtained by collapsing $D$ to a single point, where $g$ is an appropriate quotient map obtained from $f$. Then we have a natural factor map $\eta \colon (Z,f)\to (Y,g)$. Let $\tilde{\Gamma}$ be obtained from $\Gamma$ by collapsing $D\cap \Gamma$. Then $\tilde{\Gamma}\subset Y$ and we also obtain an induced map $\tilde{\eta}\colon (Y,g)\to (X,\sigma)$ such that $\pi = \tilde{\eta}\circ \eta$.
	Among other things, it implies that $\htop(g)>\alpha$. Let $\mu$ be an ergodic measure with $h_\mu(g)>\alpha$ obtained by the variational principle. Note that $(Y,g)$ is proximal, since there are no minimal subsets in it other that the fixed point $p$ obtained by the collapse of $D$ in $Z$. But then, by ergodic theorem $\mu(\set{p})=0$. Observe that $\eta$ is one-to-one everywhere outside $D$ and hence we may view $\mu$
	as a measure on $Z$ with $\mu(D)=0$. Clearly this backward projection by $\eta$ does not change entropy of $\mu$.
	To complete the proof, it is enough to take as $\nu$ any ergodic measure supported on some minimal set contained in the invariant set $D\cap \supp(\mu)\neq \emptyset$.
\end{proof}
\begin{lem}\label{horseshoe}
	Suppose that a dynamical system $(X,f)$ acting on a compact metric space $X$ has the shadowing property and suppose that  there exists $\alpha>0$ and an ergodic measure  $\mu\in\mathcal{M}_f(X)$ with $h_\mu(f)>\alpha$.
	 Then there are $m,k\in \N$, $\log (m)/k>\alpha$ and a closed set $\Lambda\subset X$
	 invariant under $f^k$ such that there is a factor map $\pi\colon (\Lambda, f^k)\to (\Sigma_{m+1}^+,\sigma)$
\end{lem}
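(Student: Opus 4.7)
The plan is to use Katok's local entropy estimate together with the ergodic theorem to produce a large $(k,\varepsilon)$-separated set $\{y_0,\dots,y_m\}$ whose elements, together with their $f^k$-images, all lie in a common small ball $U\subset X$. Shadowing then lets us concatenate the orbit pieces of the $y_i$'s along any sequence $\omega\in\Sigma_{m+1}^+$ to form $\delta$-pseudo-orbits, whose shadows define an $f^k$-invariant compact set $\Lambda\subset X$ and a factor map $\pi\colon\Lambda\to\Sigma_{m+1}^+$.

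First, I would fix $\eta\in\bigl(0,(h_\mu(f)-\alpha)/4\bigr)$. By the Brin--Katok local entropy formula (deducible from Katok's formula~\eqref{Katok-entropy-formula}) one can choose $\varepsilon>0$, $N_0\in\N$, and a measurable set $Y\subset X$ with $\mu(Y)$ close to $1$ so that $\mu(B_n(y,\varepsilon/2))\le e^{-n(h_\mu(f)-\eta)}$ uniformly for $y\in Y$ and $n\ge N_0$. By shadowing, pick $\delta\in(0,\varepsilon/4)$ such that every $\delta$-pseudo-orbit is $(\varepsilon/4)$-shadowed. Choose $z\in\supp\mu$ and set $U:=B(z,\delta/2)$, $r:=\mu(U)>0$. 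Applying the Birkhoff ergodic theorem to $\mathbf{1}_U$ yields $\tfrac1N\sum_{n=0}^{N-1}\mu(U\cap f^{-n}U)\to r^2$, so there are arbitrarily large $n$ with $\mu(U\cap f^{-n}U)>r^2/2$; pick such $k\ge N_0$ large enough that also $\mu(U\cap f^{-k}U\cap Y\cap\supp\mu)>r^2/4$.

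Next I would greedily select $y_0,\dots,y_m$ in $U\cap f^{-k}U\cap Y\cap\supp\mu$ with pairwise disjoint Bowen balls $B_k(y_i,\varepsilon/2)$. The uniform estimate gives $m+1\ge(r^2/4)\,e^{k(h_\mu(f)-\eta)}$, whence $\log m/k>\alpha$ for all sufficiently large $k$. By construction $\{y_0,\dots,y_m\}$ is $(k,\varepsilon)$-separated, and $y_i,\ f^k(y_i)\in U$ for every $i$, so $d(f^k(y_i),y_{i'})\le\diam U<\delta$ for all $i,i'$. Hence for every $\omega\in\Sigma_{m+1}^+=\{0,\dots,m\}^{\N_0}$ the concatenation
$$p_\omega:=\bigl(y_{\omega_0},f(y_{\omega_0}),\dots,f^{k-1}(y_{\omega_0}),\,y_{\omega_1},f(y_{\omega_1}),\dots\bigr)$$
is a $\delta$-pseudo-orbit, which is $(\varepsilon/4)$-shadowed by some $x_\omega\in X$.

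Finally, I would define
$$\Lambda:=\bigcap_{j\ge0}\bigcup_{i=0}^{m}\bigcap_{l=0}^{k-1}\bigl\{x\in X:d(f^{jk+l}(x),f^{l}(y_i))\le\varepsilon/4\bigr\},$$
a closed, $f^k$-invariant set containing every $x_\omega$. Set $\pi\colon\Lambda\to\Sigma_{m+1}^+$ so that $\pi(x)_j$ is the unique $i\in\{0,\dots,m\}$ realizing the $j$-th block condition; uniqueness is enforced by the $(k,\varepsilon)$-separation of the $y_i$'s, while continuity of $\pi$, the semiconjugacy $\pi\circ f^k=\sigma\circ\pi$, and surjectivity (realized by the $x_\omega$'s) are routine consequences of the construction. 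The main technical difficulty is the delicate coordination in the second paragraph: arranging a single scale $k$ at which the Brin--Katok bound, the ergodic-return estimate for $U\cap f^{-k}U$, and Egoroff uniformity on $Y$ all take effect simultaneously, so that the resulting separated set is both large enough to meet the entropy lower bound and confined enough to $U$ for the concatenations to form genuine $\delta$-pseudo-orbits.
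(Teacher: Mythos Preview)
Your argument is essentially correct and takes a genuinely different route from the paper's. The paper covers $\supp(\mu)$ by finitely many $\delta$-small open sets $U_1,\dots,U_s$, produces a large $(n,\eps)$-separated set $\Gamma_n\subset\supp(\mu)$ via Katok's counting formula (Lemma~\ref{useful-proposition}), and then uses pigeonhole over the $s^2$ pairs $(U_i,U_j)$ to isolate a large $\Gamma_n^{ij}\subset\Gamma_n$ whose points start in $U_i$ and land in $U_j$ at time $n$; transitivity of $f|_{\supp\mu}$ then supplies a connecting orbit from $U_j$ back to $U_i$ of bounded length $k_{ji}\le K$, and the block length becomes $k=n+k_{ji}$. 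You replace the cover/pigeonhole/transitivity package by a single ball $U$ and the ergodic return estimate $\tfrac1N\sum_{n<N}\mu(U\cap f^{-n}U)\to\mu(U)^2$, which furnishes arbitrarily large $k$ with $\mu(U\cap f^{-k}U)$ bounded below; combined with a pointwise Brin--Katok/Egoroff bound this yields a large separated set already starting and ending in $U$, so no connecting segment is needed and the block length is exactly $k$. Your approach buys a cleaner block structure at the cost of invoking Brin--Katok (a separate theorem, not literally deducible from~\eqref{Katok-entropy-formula}) and a more delicate order of quantifiers: you must fix $\eps$, then $\delta$, then $U$ and $r=\mu(U)$, and only afterwards choose the Egoroff set $Y$ with $\mu(Y)>1-r^2/4$; your present ordering picks $Y$ before $U$.

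Two small parameter slips to repair. First, in a general metric space disjointness of the open balls $B_k(y_i,\eps/2)$ only forces $d_k(y_i,y_j)\ge\eps/2$, not $(k,\eps)$-separation; with $\eps/4$-shadowing this leaves the uniqueness of $\pi(x)_j$ borderline. Either work directly with a maximal $(k,\eps)$-separated subset, or shrink the shadowing radius. Second, maximality of the disjoint-ball family gives a cover by the larger balls $B_k(y_i,\eps)$, so the Brin--Katok upper bound you use in the cardinality estimate must be stated at scale $\eps$, not $\eps/2$. Neither point affects the strategy.
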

\begin{proof}
Fix any $\eta>0$ such that $h_\mu(f)-4\eta>\alpha$. By Lemma~\ref{useful-proposition} there exists $\eps>0$ and $N\in\mathbb{N}$ such that for any $n\geq N$, there exists $\Gamma_n\subset \supp(\mu)$ which is $(n,\eps)$-separated and satisfies $|\Gamma_n|\geq e^{n(h_{\mu}(f)-\eta)}$.

Let $\delta>0$ be such that every $\delta$-pseudo orbit is $\eps/3$ traced and cover $\supp(\mu)$ by opens sets $U_1,\ldots,U_s$
such that $\diam(U_i)<\delta$ for every $i$. Since $\mu$ is ergodic, $f|_{supp(\mu)}$ is transitive thus for every $1\leq i,j\leq s$, there exists a point
$z_{ij}\in U_i$ such that $f^{k_{ij}}(z_i)\in U_j$ for some $k_{ij}\in \mathbb{N }$. Denote
$$K=\max_{1\leq i,j\leq s}\{k_{ij}\}.$$

Now we choose $n\in \mathbb{N}$ large enough such that
\begin{eqnarray}
  \label{s-n}2\log(s)/n &<& \eta, \\
  \frac{n}{n+K}h_\mu(f) &<& h_{\mu}(f)-\eta.
\end{eqnarray}
For every $1\leq i,j\leq s$ put $\Gamma_n^{ij}=\set{z\in \Gamma_n : z\in U_i, f^n(z)\in U_j}$. Fix some $1\leq i,j\leq s$
such that $|\Gamma_n^{ij}|\geq |\Gamma_n|/s^2$ and observe that by \eqref{s-n}, we have
$$
|\Gamma_n^{ij}|\geq e^{n(h_{\mu}(f)-2\eta)}.
$$

Enumerate elements of $\Gamma_n^{ij}$,
say $\Gamma_n^{ij}=\{p_0,\ldots,p_{r-1}\}$ where $r=|\Gamma_n^{ij}|$. Using these points define $r$ finite sequences,  putting for $l=0,1,\ldots,r-1$:
$$	
\eta(l)=(p_l,f(p_l),\dotsc,f^{n-1}(p_l)),z_{ji}, f(z_{ji}),\ldots, f^{k_{ji}-1}(z_{ji}).
$$
If we denote by $k$ length $k=|\eta(l)|=n+k_{ij}\leq n+K$ then all sequences $\eta(l)$
are $\delta$-pseudo orbits, and since $f^{k_{ji}}(z_{ji})\in U_i \supset \Gamma_n^{ij}$
we can freely concatenate sequences $\eta(l)$ obtaining another, longer $\delta$-pseudo orbits.

Let $\Sigma_r^+$ be the set of element $a=(a_0a_1\dotsc a_n\dotsc)$ such that $a_i\in \{p_0,\ldots,p_{r-1}\}$, $i\in \mathbb{Z}^+$. For every $\xi\in\Sigma_{r}^+$,
denote
$$
Y_\xi=\{z : d(f^{ik}(z),\xi_i)\leq \eps/3~\textrm{for}~i\in\mathbb{Z}^+\}.
$$
By the choice of $\delta$ and the shadowing property of $(X,f)$ we see that each set $Y_\xi$ is nonempty.
It is also clear that each such set is a closed subset of $X$.
Note that if $\xi\neq \psi$ then there is $t$ such that $\xi_t \neq \psi_t$.
But then there is $r<n$ such that $d(f^r(p_{\xi_t}),f^r(p_{\psi_t}))\geq \eps$. This immediately implies that $Y_\xi\cap Y_\psi=\emptyset$. Denote
$$
\Lambda=\bigcup_{\xi\in\Sigma_r^+} Y_\xi
$$
Note that $f^{k}(Y_\xi)\subset Y_{\sigma(\xi)}$, then $\Lambda$ is an invariant set for $f^k$.
It is also not hard to see that if $y\in Y_\xi$, $z\in Y_\psi$ and $d(f^l(x),f^l(y))<\eps/3$
for $l=0,\ldots, ks-1$ then $\xi_i=\psi_i$ for $i=0,\ldots,s-1$. So we can write $\Lambda$ as the disjoint union of $Y_{\xi}$:
$$\Lambda=\bigsqcup_{\xi\in\Sigma_r^+}Y_{\xi}.$$
Therefore, if we define $\pi \colon \Lambda \to \Sigma_{r}^+$ as
$$\pi(x):=\xi\quad \textrm{ for all } x\in Y_{\xi},$$
then $\pi$
is a continuous surjection. This shows that $\Lambda$ is closed and clearly also $\sigma \circ \pi =\pi \circ f^k$.
Finally put $m=r-1$ and observe that
$$
\frac{\log(m)}{k}=\frac{\log(r-1)}{k}\geq\frac{\log(r/2)}{k}=\frac{1}{k} n(h_\mu(f)-3\eta)\geq h_\mu(f)-4\eta>\alpha.
$$
The proof is completed.
\end{proof}

Then it is enough to combine the Variational Principle with Lemma~\ref{horseshoe} and Lemma~\ref{two-measures} to get the following corollary.
\begin{cor}\label{cor:measures_entropy}
If $(X,f)$ satisfies the shadowing property and $\htop(f)>0$, then there exists a sequence of 	ergodic measures $\set{\mu_n}_{n=1}^\infty \subset \mathcal{M}_f(X)$ such that
$\htop(f)=\lim_{n\to\infty}h_{\mu_n}(f)$ and for each
$n$ there is an ergodic measure $\nu_n$ such that $\mu_n\neq \nu_n$ and $\supp(\nu_n)\subset \supp(\mu_n)$,
i.e. $f$ restricted to $\supp(\mu_n)$ is not uniquely ergodic.
\end{cor}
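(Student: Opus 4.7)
The plan is to chain the Variational Principle with Lemma~\ref{horseshoe} and then Lemma~\ref{two-measures}, each time feeding the output of one step as input to the next.

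First I would fix a sequence $\alpha_n \nearrow \htop(f)$ (taking $\alpha_n = \htop(f) - 1/n$ when $\htop(f)<\infty$, and $\alpha_n = n$ when $\htop(f) = \infty$). By the Variational Principle together with the fact that entropy is the supremum of $h_\nu(f)$ over ergodic measures, for each $n$ there exists an ergodic measure $\mu_n' \in \mathcal{M}_f(X)$ with $h_{\mu_n'}(f) > \alpha_n$. At this stage $\mu_n'$ need not have the non-unique-ergodicity property we want on its support, so $\mu_n'$ is only an auxiliary object.

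Next, I would apply Lemma~\ref{horseshoe} to the pair $(\mu_n', \alpha_n)$: since $(X,f)$ has the shadowing property and $h_{\mu_n'}(f)>\alpha_n$, the lemma provides integers $m_n, k_n \in \N$ with $\log(m_n)/k_n > \alpha_n$, together with a closed $f^{k_n}$-invariant set $\Lambda_n \subset X$ and a factor map $\pi_n\colon (\Lambda_n, f^{k_n}) \to (\Sigma_{m_n+1}^+, \sigma)$. Then I would feed $(m_n, k_n, \Lambda_n, \alpha_n)$ into Lemma~\ref{two-measures}, whose hypotheses are exactly what Lemma~\ref{horseshoe} produces. This yields an ergodic measure $\mu_n$ with $h_{\mu_n}(f) > \alpha_n$, together with an ergodic measure $\nu_n \neq \mu_n$ satisfying $\supp(\nu_n) \subset \supp(\mu_n)$. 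In particular $f|_{\supp(\mu_n)}$ carries at least two distinct ergodic measures, so it is not uniquely ergodic.

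Finally, since the Variational Principle also gives $h_{\mu_n}(f) \leq \htop(f)$ for each $n$, the inequalities $\alpha_n < h_{\mu_n}(f) \leq \htop(f)$ together with $\alpha_n \to \htop(f)$ force $h_{\mu_n}(f) \to \htop(f)$, which completes the proof. There is no real obstacle here beyond bookkeeping: the work has already been done in the two lemmas, and the corollary is a clean composition of them. The only mild subtlety is choosing the $\alpha_n$ so that the construction applies uniformly in both the finite-entropy and infinite-entropy cases, but this is handled by the choice above.
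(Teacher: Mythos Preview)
Your proposal is correct and follows exactly the approach sketched in the paper, which simply says to combine the Variational Principle with Lemma~\ref{horseshoe} and Lemma~\ref{two-measures}. You have just written out the chaining explicitly, including the handling of the finite versus infinite entropy cases, which the paper leaves implicit.
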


\section{Proof of Theorem~\ref{X-full}}\label{sec:X-full}
%
First we prove the following theorem which covers a part of Theorem~\ref{X-full}.
\begin{thm} \label{thm:shadowingpte}
If $f$ satisfies the shadowing property and $\htop(f)>0$, then
\begin{eqnarray*}
\htop(f)&= &\htop(f,I(f))\\
&=& \sup \{\htop(f|_{\Lambda})~:~\Lambda\subset X ~\textrm{is a closed invariant set with}\\
& &\qquad\qquad f|_{\Lambda}~\textrm{transitive and not uniquely ergodic}\}.
\end{eqnarray*}
\end{thm}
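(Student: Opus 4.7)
Both upper bounds $\htop(f,I(f))\leq \htop(f)$ and $\htop(f|_\Lambda)\leq \htop(f)$ are immediate from the monotonicity of Bowen entropy and the variational principle, so all of the work lies in the matching lower bounds. Fix $\alpha<\htop(f)$. Corollary~\ref{cor:measures_entropy} supplies an ergodic measure $\mu$ with $h_\mu(f)>\alpha$ together with a distinct ergodic measure $\nu$ such that $\supp(\nu)\subset \supp(\mu)$. The supremum half of the theorem is then immediate: $\Lambda:=\supp(\mu)$ is closed and $f$-invariant, $f|_\Lambda$ is transitive because $\mu$ is ergodic of full support on $\Lambda$, the existence of $\nu\neq \mu$ forbids unique ergodicity, and by the variational principle $\htop(f|_\Lambda)\geq h_\mu(f)>\alpha$.

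For the irregular-set half, I run a Moran/Thompson-style construction inside $\supp(\mu)$. Pick $\varphi\in C(X,\R)$ with $a:=\int \varphi\,d\mu\neq \int \varphi\,d\nu=:b$ and $\eta\in(0,h_\mu(f)-\alpha)$. Lemma~\ref{useful-proposition} gives an $\eps>0$ such that, for every weak$^*$ neighborhood $F$ of $\mu$ and every sufficiently large $n$, there is an $(n,\eps)$-separated set $\Gamma_{n,F}\subset \supp(\mu)$ with $|\Gamma_{n,F}|\geq e^{n(h_\mu(f)-\eta)}$ and $\E_n(\gamma)\in F$ for every $\gamma\in\Gamma_{n,F}$; in particular $\tfrac1n\sum_{i<n}\varphi(f^i(\gamma))$ can be made as close to $a$ as desired. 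Fix a $\nu$-generic point $y\in \supp(\nu)\subset \supp(\mu)$, and a shadowing constant $\delta$ associated to precision $\eps/2$. Transitivity of $f|_{\supp(\mu)}$ together with a finite-cover argument bounds, by a uniform $K$, the length of an orbit segment needed to connect prescribed start and end points in $\supp(\mu)$ within distance $\delta$. Now choose rapidly increasing $n_k,m_k$ (each dwarfing the sum of all previous block lengths) and neighborhoods $F_k\downarrow\{\mu\}$. For every sequence $(\gamma_k)$ with $\gamma_k\in\Gamma_{n_k,F_k}$, assemble a $\delta$-pseudo-orbit by concatenating the $n_k$-orbit of $\gamma_k$, a transit segment of length $\leq K$, the $m_k$-orbit of $y$, another transit segment, and so on, and shadow it by a genuine orbit of some $z\in X$.

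The rapid-growth choice forces $\tfrac1N\sum_{i<N}\varphi(f^i(z))$ to oscillate between values arbitrarily close to $a$ (at the end of each $\gamma_k$-block) and to $b$ (at the end of each $y$-block), so $z\in I(f)$. Distinct sequences $(\gamma_k)$ yield shadows that are $(N_k,\eps/2)$-separated at the checkpoints $N_k=\sum_{j\leq k}(n_j+m_j)+O(kK)$, and because $m_j+K\ll n_{j+1}$ the exponential growth rate of the shadow count at scale $N_k$ tends to $h_\mu(f)-\eta>\alpha$. A standard Carath\'eodory-dimension argument — since shadows are $(N_k,\eps/2)$-separated, any Bowen ball $B_u(\cdot,\eps/4)$ with $u\geq N_k$ meets at most one shadow, so every cover of the shadow set by such balls has total $e^{-tu}$-weight at least $e^{(h_\mu(f)-\eta-t)N_k}$ — upgrades this separated-set estimate at every scale to the Bowen-entropy inequality $\htop(f,I(f))\geq h_\mu(f)-\eta>\alpha$. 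The principal technical obstacle is precisely this last step: tuning the rates $n_k,m_k,K$ so that the block structure simultaneously certifies the oscillation forcing $z\in I(f)$ and the Carath\'eodory-type lower bound for the non-compact shadow set; the rapid-growth hypothesis is what reconciles the two requirements.
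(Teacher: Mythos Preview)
Your treatment of the upper bounds and of the supremum over transitive, non-uniquely-ergodic $\Lambda$ is correct and matches the paper. The gap is in the entropy estimate for $I(f)$. If each of $n_k,m_k$ dwarfs the sum of all previous block lengths then in particular $m_k\gg n_k$; at the checkpoint $N_k\sim m_k$ the number of distinct shadows is only $\prod_{j\le k}|\Gamma_{n_j,F_j}|\sim\exp\bigl((h_\mu(f)-\eta)n_k\bigr)$, since the long $y$-segments contribute no branching. The exponential growth rate at scale $N_k$ is therefore $(h_\mu(f)-\eta)n_k/m_k\to 0$, not $h_\mu(f)-\eta$. This is fatal for the Bowen entropy: the adversary in the Carath\'eodory infimum may cover each $(\gamma_1,\dots,\gamma_k)$-class by a single depth-$N_k$ ball (all shadows in a class follow the same pseudo-orbit up to time $N_k$), producing total weight $\prod_j|\Gamma_{n_j}|\,e^{-tN_k}\to 0$ for every $t>0$, hence $\htop(f,G)=0$. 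Conversely, taking $m_k/n_k\to 0$ restores the entropy but kills the oscillation, since the average at the end of the $y$-block becomes $(a+\tfrac{m_k}{n_k}b)/(1+\tfrac{m_k}{n_k})\to a$. The condition $m_j+K\ll n_{j+1}$ is irrelevant here; the obstruction is $m_k$ versus $n_k$ within a single stage.

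The paper resolves this tension by decoupling the two mechanisms. All blocks have the \emph{same} length $L$, with $\nu$-blocks forming only a fixed $1/(t{+}1)$ fraction, so the entropy degrades by the controllable factor $t/(t{+}1)$. Oscillation is then engineered at a second, ``epoch'' scale: during $[S_{2k},S_{2k+1}]$ every block is a $\mu$-block, during $[S_{2k+1},S_{2k+2}]$ every $(t{+}1)$-st block is a $\nu$-block, and the geometric epoch lengths $N_n=(\lambda+1)^{n-1}$ make the current epoch dominate all earlier ones. The empirical measures thus oscillate between neighborhoods of $\mu$ and of $\beta\mu+(1-\beta)\nu$ (not of $\nu$ itself) while the density of entropy-producing blocks stays uniformly near $1$. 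With blocks of bounded length the Bowen-entropy lower bound also requires the prefix-counting argument of Step~4, not merely a separated-set count along a single sequence of scales, because covers may mix balls of arbitrary depths $u$.
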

\begin{proof}
First we consider the case of $\htop(f)<\infty$ and divide our proof into the following four steps.
\begin{enumerate}[1.]
  \item For any $\eta>0$ find $\Lambda$ with $f|_\Lambda$ transitive, non-uniquely ergodic, $\htop(f)-\eta<\htop(f|_\Lambda)$ and pick a sufficient number of pseudo-orbits;
  \item Construct an uncountable compact set $G$;
\item Prove that $G\subset I(f)$;
  \item Prove that $\htop(f,G)>\htop(f|_{\Lambda})-6\eta$.
\end{enumerate}

\textbf{Step 1}. For any $\eta>0$, by Corollary~\ref{cor:measures_entropy} we select an ergodic measure $\mu\in\mathcal{M}_f(\Lambda)$ such that
$$h_{\mu}(f)>\htop(f)-\eta$$
and another ergodic measure $\nu$ with $\supp(\nu)\subset \supp(\mu)$. Clearly $f$ restricted to invariant set $\Lambda=\supp{\mu}$ is transitive (e.g. see \cite{Walters}).
Denote $d(\mu,\nu)=\alpha$ and let $t\in \mathbb{N}$ be such that
	\begin{equation}\label{estimation-of-t}
	t(h_{\mu}(f)-3\eta)\geq(t+1)(h_{\mu}(f)-4\eta).
	\end{equation}
	
By Lemma~\ref{useful-proposition}, for $\mu$, there exist $\eps>0$ such that if we denote  $F_{1}=\mathcal{B}(\mu,\frac{\alpha}{6(t+1)})\subset \mathcal{M}(X)$ then there exists $m_{F_1}\in\mathbb{N}$ such  that for any $n\geq m_{F_{1}}$, there exists $\Gamma_n^{\mu}\subset X_{n,F_{1}}\cap \supp(\mu)$ which is $(n,\eps)$-separated and satisfies $|\Gamma_n^{\mu}|\geq e^{n(h_{\mu}(f)-\eta)}$.

Denote $F_{2}=\mathcal{B}(\nu,\frac{\alpha}{6(t+1)})$ and let $\Gamma_n^{\nu}=\set{z}$ for some generic point $z$ for $\nu$. Then there exists $m_{F_2}\in\mathbb{N}$ such that $\Gamma_n^{\nu}\subset X_{n,F_{2}}\cap \supp(\nu)$ for any $n\geq m_{F_{2}}$.
	
By the shadowing property, for
\begin{equation}\label{selection-of-zeta}
	\zeta=\min\set{\frac{\alpha}{12(t+1)},\frac{\eps}{3}},
\end{equation}
there exists $\gamma>0$ such that any $\gamma$-pseudo-orbit can be $\zeta$-shadowed by some point.
	
Since $\Lambda$ is closed and thus compact, there exists a finite open cover $\{U_i\}_{i=1}^s$ of $\Lambda$ with $\diam(U_i)<\gamma$ for $i=1,\cdots,s$.
Since $f|_{\Lambda}$ is transitive, for any $1\leq i,j\leq s$, there exist $n_{ij}\in\mathbb{N}$ and 
$y_{ij}\in U_i\cap \Lambda$ such that $f^{n_{ij}}(y_{ij})\in U_j$. Denote
\begin{eqnarray*}
T&:=&\max_{1\leq i,j\leq s}\{n_{ij}\},\\
L&:=&\max\{m_{F_1},m_{F_2},\frac{48T(t+1)}{\alpha},\frac{T(h_{\mu}(f)-3\eta)}{\eta},\frac{2\ln (s)}{\eta},5T 
\}.
\label{chosen-of-L}
\end{eqnarray*}
By the definition, there exist $(L,\eps)$-separated sets $\Gamma_L^{\mu}\subset X_{L,F_{1}}\cap \supp(\mu)$ with
	\begin{equation}\label{estimation-of-entropy}
	|\Gamma_L^{\mu}|\geq e^{L(h_{\mu}(f)-\eta)}.
	\end{equation}
and then by pigeonhole principle we obtain  $1\leq i_1, j_1\leq s$ and  $\widetilde{\Gamma}_L^{\mu}\subset \Gamma_L^{\mu}$  such that $\widetilde{\Gamma}_L^{\mu}\subset U_{i_1}$, $f^L(\widetilde{\Gamma}_L^{\mu})\subset U_{j_1}$
and
	\begin{equation}\label{cardinality-of-gamma-tilde-mu}
	|\widetilde{\Gamma}_L^{\mu}|\geq |\Gamma_L^{\mu}|/s^2\geq \frac{e^{L(h_{\mu}(f)-\eta)}}{s^2}\geq e^{L(h_{\mu}(f)-2\eta)}.
	\end{equation}
Take (the unique) $z\in \Gamma_L^{\nu}$ and let $1\leq i_2,j_2\leq s$ denote indexes such that
$z\in U_{i_2}$ and $f^L(z)\in U_{j_2}$.	
	
		
\textbf{Step 2}.	Now, for $n\in \mathbb{N}$, let us inductively define
$$S_0=0,~N_1=1,~S_n=\sum_{i=1}^nN_i~\textrm{ and} ~N_n=\lambda S_{n-1} ~\textrm{for}~ n\geq2,$$
 where $\lambda >\frac{48(t+1)}{\alpha}$ is a fixed integer. Observe that we have an explicit formula $N_n=(\lambda+1)^{n-1}$.
For each $m\in \N$ define measure $\omega_m$ and integer $l_m$ by the following procedure.	
If $m\in [S_{2k}(t+1)+1,S_{2k+1}(t+1)]$ for some $k\geq 0,$ then we put
$$\omega_m=\mu~\textrm{and}~l_m=n_{j_1i_1}.$$
Otherwise, $m\in [S_{2k+1}(t+1)+1,S_{2k+2}(t+1)]$ for some $k\geq 0$ and then there are uniquely determined integers $r\in [0,N_{2k+1})$
and $\tilde{r}\in [1,t+1]$ such that $m=(S_{2k+1}+r)(t+1)+\tilde{r}$. In this case, we put
$$
\omega_m=\begin{cases}
\mu,& \textrm{if }\tilde{r}\leq t\\
\nu,& \textrm{if }\tilde{r}=t+1
\end{cases},\quad\quad
l_m=\begin{cases}
n_{j_1i_1},& \textrm{if }\tilde{r}<t\\
n_{j_1i_2},& \textrm{if }\tilde{r}=t\\
n_{j_2i_1},& \textrm{if }\tilde{r}=t+1\\
\end{cases}
$$
	
%
%
	
For each $m\geq 1$ put $\widetilde{\Gamma}_m=\widetilde{\Gamma}_L^{\omega_m}$ and select an arbitrary point $x_m\in \widetilde{\Gamma}_m$. Clearly, each $\widetilde{\Gamma}_m$ is
a finite set and when $\omega_m=\mu$,
the choice of $x_m$ is not unique. In what follows, performing the above procedure, we construct a family of sequences rather than one concrete sequence.
Set
	\begin{equation}\label{selection-of-M-m}
	M_m=mL+\sum_{i=1}^ml_i
	\end{equation}	
and define a sequence $\{w_u\}_{u=0}^\infty$ as a concatenation of the following blocks.

First put $a=M_{(S_{2k}+r)(t+1)}$ and $b=M_{(S_{2k}+r+1)(t+1)}-1$ for some $k\geq 0$
and $0\leq r<N_{2k+1}$. Then the segment $\{w_u\}_{u=a}^b$ is defined by
\begin{equation}\label{segment-one}
		\begin{split}
		&x_{(S_{2k}+r)(t+1)+1},\cdots,f^{L-1}(x_{(S_{2k}+r)(t+1)+1}),y_{j_1i_1},\cdots,f^{n_{j_1i_1}-1}(y_{j_1i_1}),\\
		&x_{(S_{2k}+r)(t+1)+2},\cdots,f^{L-1}(x_{(S_{2k}+r)(t+1)+2}),y_{j_1i_1},\cdots,f^{n_{j_1i_1}-1}(y_{j_1i_1}),\\
		&\qquad\qquad\qquad\qquad\vdots\\
		&x_{(S_{2k}+r+1)(t+1)},\cdots,f^{L-1}(x_{(S_{2k}+r+1)(t+1)}),y_{j_1i_1},\cdots,f^{n_{j_1i_1}-1}(y_{j_1i_1}).\\
		\end{split}
		\end{equation}
Complementary segments of the sequence, for indexes between $a=M_{(S_{2k+1}+r)(t+1)}$ and $b=M_{(S_{2k+1}+r+1)(t+1)}+1$ for some $k\geq 0$
and $0\leq r<N_{2k+1}$ have a more complicated structure. We define them by:
\begin{equation}\label{segment-two}
\begin{split}
&x_{(S_{2k+1}+r)(t+1)+1},\cdots,f^{L-1}(x_{(S_{2k+1}+r)(t+1)+1}),y_{j_1i_1},\cdots,f^{n_{j_1i_1}-1}(y_{j_1i_1}),\\
&\qquad\qquad\qquad\qquad\vdots\\
&x_{(S_{2k+1}+r)(t+1)+t-1},\cdots,f^{L-1}(x_{(S_{2k+1}+r)(t+1)+t-1}),y_{j_1i_1},\cdots,f^{n_{j_1i_1}-1}(y_{j_1i_1}),\\
&x_{(S_{2k+1}+r)(t+1)+t},\cdots,f^{L-1}(x_{(S_{2k+1}+r)(t+1)+t}),y_{j_1i_2},\cdots,f^{n_{j_1i_2}-1}(y_{j_1i_2}),\\
&x_{(S_{2k+1}+r+1)(t+1)},\cdots,f^{L-1}(x_{(S_{2k+1}+r+1)(t+1)}),y_{j_2i_1},\cdots,f^{n_{j_2i_1}-1}(y_{j_2i_1}).\\
\end{split}
\end{equation}
%
		
By the selection of points $\{x_m\}_{m\geq1}$ and the definition of $y_{i_1i_1},y_{j_1i_2},y_{j_2i_1}$ it is easy to see that $\{w_u\}_{u=0}^{\infty}$ constitutes a $\gamma$-pseudo-orbit. Thus by the shadowing property, $\{w_u\}_{u=0}^{\infty}$  is $\zeta$-shadowed by some point in $X$.

For $d\geq1$ and any choice of $z_1\in \widetilde{\Gamma}_1,\ldots, z_d\in \widetilde{\Gamma}_d$, denote
\begin{eqnarray*}
&&G_d(z_1,\ldots,z_d):=\bigcap_{m=1}^df^{-M_{m-1}}\overline{B_L(z_m,\zeta)},\\
&&G_d:=\bigcap_{m=1}^d\bigcup_{x_m\in \widetilde{\Gamma}_m}f^{-M_{m-1}}\overline{B_L(x_m,\zeta)}.
\end{eqnarray*}
Observe that both $G_d(z_1,\ldots,z_d)$ and $G_d$ are nonempty closed sets. Moreover, for $d\geq 1$, we have inclusions
\begin{eqnarray*}
&&G_d(z_1,\ldots,z_d)\supset G_{d+1}(z_1,\ldots,z_d,z_{d+1})\\
&&G_d\supset G_{d+1}.
\end{eqnarray*}
which implies that the following sets are closed and nonempty:
\begin{eqnarray*}
&&G(z_1,z_2,\ldots):=\bigcap_{d\geq1}G_d(z_1,\ldots,z_d)\\
&&G:=\bigcap_{d\geq1}G_d.
\end{eqnarray*}
	
Let $x_d,y_d\in \widetilde{\Gamma}_d$ with $x_d\neq y_d$. Since $x_d$ and $y_d$ are $(L,\eps)$-separated, there exists $0\leq l<L$ such that $$d(f^l(x_d),f^l(y_d))>\eps,$$
therefore, if $x\in B_L(x_d,\zeta)$ and $y\in B_L(y_d,\zeta)$, then
\begin{eqnarray*}
d(f^l(x),f^l(y))&\geq& d(f^l(x_d),f^l(y_d))-d(f^l(x_d),f^l(x))-d(f^l(y_d),f^l(y))\\
&>&\eps-2\zeta>\frac{\eps}{3}
\end{eqnarray*}
by the selection of $\zeta$ in (\ref{selection-of-zeta}). This implies that
$$
G_d(z_1,\ldots,z_{d-1},x_d)\cap G_d(z_1,\ldots,z_{d-1},y_d)=\emptyset
$$
which shows that $G$ is a closed set which is a disjoint union of closed sets $G(x_1,x_2,\cdots)$ labeled by $(x_1,x_2,\cdots)$ with $x_m\in \widetilde{\Gamma}_m$ for each $m$. In other words, two different sequences label two different sets.

\textbf{Step 3}. Next we are going to prove that	
\begin{equation}\label{G-is-irregular}
	G\subset I(f).
\end{equation}
	
Let $z$ be a point corresponding to some closed set $z\in G(x_1,x_2,\cdots)$ with $x_m\in \widetilde{\Gamma}_m$. Then $z$ $\zeta$-shadows some $\gamma$-pseudo-orbit $\{w_u\}_{u=0}^{\infty}$
constructed above in Step~2.
Since for every $x,y\in X$ we have $\rho(\delta_x,\delta_y)=d(x,y)$,
thus it is easily seen that
	\begin{equation}\label{orbit-and-pseudo-orbit}
	\rho(\mathcal{E}_n(z),\frac{\sum_{u=0}^{n-1}\delta_{z_u}}{n})\leq \zeta,~\forall n\in \mathbb{N}.
	\end{equation}
	
Since  $x_m\in \widetilde{\Gamma}_m$, we have
	\begin{equation}\label{x-m}
	\rho(\mathcal{E}_L(x_m),\omega_m)\leq \frac{\alpha}{6(t+1)}
	\end{equation}
by the selection of open sets $F_1$ and $F_2$. Therefore, Lemma~\ref{lem:prohorov}\eqref{lem:prohorov:3} gives
\begin{eqnarray*}
	\rho(\mathcal{E}_{L+l_m}(x_m),\omega_m)&\leq& \rho(\mathcal{E}_{L}(x_m),\omega_m)+\rho(\mathcal{E}_{L}(x_m),\mathcal{E}_{L+l_m}(x_m))\\
	&\leq& \frac{\alpha}{6(t+1)}+\frac{2T}{L}.
\end{eqnarray*}
	Thus, by the selection of $L$ in \eqref{chosen-of-L}, and  Lemma~\ref{lem:prohorov} we have
	\begin{eqnarray*}
		& & \rho(\mathcal{E}_{S_{2k-1}(t+1)L+\sum_{m=1}^{S_{2k-1}(t+1)}l_m}(z),\mu) \\
		&&\qquad\leq \frac{N_{2k-1}}{S_{2k-1}}(\frac{\alpha}{6(t+1)}+\frac{2T}{L})+2\frac{S_{2k-2}}{S_{2k-1}}+\zeta \\
		&&\qquad\leq \frac{\alpha}{6(t+1)}+\frac{2T}{L}+\frac{2}{\lambda}+\frac{\alpha}{12(t+1)} \\
		&&\qquad\leq \frac{\alpha}{6(t+1)}+\frac{\alpha}{24(t+1)}+\frac{\alpha}{24(t+1)}+\frac{\alpha}{12(t+1)} \\
		&&\qquad\leq \frac{\alpha}{3(t+1)}.
	\end{eqnarray*}
Next, if we put
$$
\beta=\frac{tL+(t-1)n_{j_1i_1}+n_{j_1i_2}}{(t+1)L+(t-1)n_{j_1i_1}+n_{j_1i_2}+n_{i_2j_1}}
$$
then by Lemma~\ref{lem:prohorov}, similarly as before we obtain that
	\begin{eqnarray*} 
		& & \rho(\mathcal{E}_{S_{2k}(t+1)L+\sum_{m=1}^{S_{2k}(t+1)}l_m}(z),\beta \mu+(1-\beta)\nu) \\
		     &&\qquad\leq \frac{N_{2k}}{S_{2k}}\left[\beta\left(\frac{\alpha}{6(t+1)}+\frac{2T}{L}\right)+ 
(1-\beta)\left(\frac{\alpha}{6(t+1)}+\frac{2T}{L}\right)\right]+\frac{2S_{2k-1}}{S_{2k}}+\zeta \\
		&&\qquad\leq \frac{\alpha}{6(t+1)}+\frac{2T}{L}+\frac{2}{\lambda}+\zeta\\
&&\qquad\leq\frac{\alpha}{6(t+1)}+\frac{\alpha}{24(t+1)}+\frac{\alpha}{24(t+1)}+\frac{\alpha}{12(t+1)} \\
&&\qquad\leq\frac{\alpha}{3(t+1)}.
	\end{eqnarray*}
Now it is enough to observe that
\begin{eqnarray*}
	d(\mu,\beta \mu+(1-\beta)\nu)&=& \alpha(1-\beta)\geq \frac{L}{(L+T)(t+1)}\alpha\geq \frac{5\alpha}{6(t+1)}\\
	&>&\frac{\alpha}{3(t+1)}+\frac{\alpha}{3(t+1)},
\end{eqnarray*}
which shows that $\{\mathcal{E}_n(z)\}_{n\geq1}$ diverges, and so $z\in I(f)$ by Lemma~\ref{lem:quasi}.
%

\textbf{Step 4}.
Let $h=h_{\mu}(f)-5\eta>\htop(f)-6\eta$. We are going to show that
     \begin{equation}\label{lower-bound-of-entropy}
       \htop(f,G)\geq h.
     \end{equation}
     Recall that  $\htop(f,G)$ is defined in \eqref{definition-of-topological-entropy} and we will follow the
     notation introduced there. Since $C(G;s,\sigma,f)$ is a non-increasing function of $\sigma$, it is enough to prove that there exists $\widetilde{\sigma}>0$ such that
     \begin{equation}\label{estimation-of-C}
       C(G;h,\widetilde{\sigma},f)\geq1.
     \end{equation}
We will prove that (\ref{estimation-of-C}) holds for $\widetilde{\sigma}=\frac{\eps}{6}$.
By definition,
$$C(G;h,\frac{\eps}{6},f)=\lim_{n\rightarrow\infty}C(G;h,n,\frac{\eps}{6},f)$$
and
  $$C(G;h,n,\frac{\eps}{6},f)=\inf_{\mathcal{C}\in\mathcal{G}_n(G,\frac{\eps}{6})}\sum_{B_u(z,\frac{\eps}{6})\in\mathcal{C}}e^{-hu}.$$
        Since $G$ is compact, we can find at least one finite cover $\mathcal{C}\in\mathcal{G}_n(G,\frac{\eps}{6})$ of $G$.
        We may also assume that if $B_u(z,\frac{\eps}{6})\in \mathcal{C}$ then $B_u(z,\frac{\eps}{6})\cap G\neq\emptyset$,
        since otherwise we may remove this set from $\mathcal{C}$ and it still remains a cover of $G$.
It is enough to show that for sufficiently large $n$ (which affects the choice of $\mathcal{C}$) we have
   \begin{equation}\label{estimation-of-C-n}
     \sum_{B_u(z,\frac{\eps}{6})\in\mathcal{C}}e^{-hu}\geq 1.
   \end{equation}
Let $q>0$ be an integer such that
  \begin{equation}\label{selection-of-q}
    \frac{q+1}{q}\leq\frac{h_{\mu}(f)-3\eta}{h_{\mu}(f)-2\eta}\cdot\frac{h_{\mu}(f)-4\eta}{h_{\mu}(f)-5\eta}.
  \end{equation}
We claim that  condition \eqref{estimation-of-C-n} holds for any $n\geq M_q$.

 Let us fix any $n\geq M_q$ and any $\mathcal {C}\in\mathcal {G}_{n}(G,\frac{\eps}{6})$. Define the cover $\mathcal {C}'$ in which each ball $B_{u}(z,\frac{\eps}{6})\in \mathcal{C}$ is replaced by $B_{M_{m}}(z,\frac{\eps}{6})$, where $M_{m}\leq u< M_{m+1}$, $m\geq q$ and $M_m$ are defined in \eqref{selection-of-M-m}. Then clearly
\begin{equation}\label{contract-prolong}
  \sum_{B_{u}(x,\frac{\eps}{6})\in\mathcal {C}}e^{-hu}\geq\sum_{B_{M_{m}}(x,\frac{\eps}{6})\in\mathcal {C}'}e^{-hM_{m+1}}.
\end{equation}
Let $c$ be the largest value of $m$ for which there exists $B_{M_{m}}(z,\frac{\eps}{6})\in\mathcal {C}'.$ Define
$$
\mathcal {V}_{c}:=\bigcup_{l=1}^{c}\mathcal {W}_{l} \quad \textrm{ where }\quad \mathcal {W}_{l}:=\prod_{i=1}^{l}\widetilde{\Gamma}_{i}
$$
and sets $\widetilde{\Gamma}_{i}$ are defined in Step~2 just before \eqref{selection-of-M-m}.
For every $1\leq j\leq k$ and $v\in\mathcal {W}_{j}$ we say that $v$ is a \emph{prefix} of $w\in\mathcal {W}_{k}$ if the first $j$ coordinates of $w$ coincide with $v$. Note that each $p\in\mathcal {W}_{m}$ is a prefix of exactly $|\mathcal {W}_{c}|/|\mathcal {W}_{m}|$ words of $\mathcal {W}_{c}$. If $\mathcal {W}\subset\mathcal {V}_{c}$ contains a prefix of each word of $\mathcal {W}_{c}$, then
$$\sum_{m=1}^{c}|\mathcal {W}\cap\mathcal {W}_{m}|\frac{|\mathcal {W}_{c}|}{|\mathcal {W}_{m}|}\geq |\mathcal {W}_{c}|.$$
To see this, it is enough to note that for each word of $\mathcal{W}_c$, one of its prefixes must be contained in  $\mathcal{W}\cap\mathcal {W}_{m}$ for some $1\leq m\leq c$. Moreover, the number of words in $\mathcal {W}_{c}$ with this prefix does not exceed $|\mathcal {W}_{c}|/|\mathcal {W}_{m}|$.
Thus if $\mathcal {W}$ contains a prefix of each word of $\mathcal {W}_{c}$, then
\begin{equation}\label{w}
  \sum_{m=1}^{c}|\mathcal {W}\cap\mathcal {W}_{m}|/|\mathcal {W}_{m}|\geq1.
\end{equation}
Note that since $\mathcal {C}'$ is a cover of $G$, $\mathcal{C}'$ corresponds to a subset $\mathcal {W}\subset\mathcal{V}_c$ such that $\mathcal{W}$ contains a prefix of each word of $\mathcal {W}_{c}$.
More precisely, as discussed at the end of Step 2, each $x\in B_{M_{m}}(z,\frac{\eps}{6})\cap G$ corresponds to a closed set $G(x_1,x_2,\cdots,x_m)\supset G$, and thus corresponds to a point in $\mathcal {W}_{m}$. Moreover, sequence $x_1,\ldots, x_m$ is uniquely defined since, if the converse is true, it is not hard to see that there exist $1\leq i\leq m$ and $x_i,y_i\in \widetilde{\Gamma}_L^{\mu}$ with $x_i\neq y_i$ such that
$$
f^{M_{i-1}}(z)\in B_L(x_i,\frac{\eps}{6}+\frac{\eps}{3})\cap B_L(y_i,\frac{\eps}{6}+\frac{\eps}{3})
$$
and this leads to:
 $$
 d_L(x_i,y_i)\leq d_L(x_i,f^{M_{i-1}}(z))+d_L(y_i,f^{M_{i-1}}(z))< \eps,
 $$
 which contradicts the fact that $x_i$ and $y_i$ are $(L,\eps)$-separated.

 Thus (\ref{w}) gives
\begin{equation}\label{estimation-of-w-m}
  \sum_{B_{M_{m}}(x,\eps)\in\mathcal {C}'}\frac{1}{|\mathcal{W}_{m}|}\geq1.
\end{equation}

By the selection of the sequence $\widetilde{\Gamma}_i$ we see that the average number of occurrences of $\widetilde{\Gamma}_L^{\nu}$ does not exceed $1/(t+1)$ and therefore we have
\begin{equation}\label{ine-one}
 |\mathcal {W}_{m}|\geq \left(e^{L(h_{\mu}(f)-2\eta)}\right)^{\frac{m t}{t+1}}.
\end{equation}
Recall that by the selection of $L$ we have
\begin{eqnarray}\label{L}
   L(h_{\mu}(f)-2\eta)&\geq& (L+T)(h_{\mu}(f)-3\eta),\label{L-T}\\
m(L+T)&\geq& M_m.\label{MmL-T}
\end{eqnarray}
Combining \eqref{ine-one}--\eqref{MmL-T} and \eqref{estimation-of-t} we have
$$
  |\mathcal {W}_{m}|\geq e^{m(L+T)(h_{\mu}(f)-4\eta)}\geq e^{M_m(h_{\mu}(f)-4\eta)}.
$$
which by \eqref{estimation-of-w-m} gives
\begin{equation}\label{h-star}
  \sum_{B_{M_{m}}(x,\eps)\in\mathcal {C}'}e^{-M_{m}(h_{\mu}(f)-4\eta)}\geq1.
\end{equation}

On the other hand, by the selection of $q$ in (\ref{selection-of-q}), definition \eqref{selection-of-M-m} and the inequality \eqref{L-T}, we easily check that for $m\geq q$,
$$\frac{M_{m+1}}{M_m}\leq \frac{(m+1)(L+T)}{mL}\leq \frac{q+1}{q}\cdot\frac{h_{\mu}(f)-2\eta}{h_{\mu}(f)-3\eta}\leq\frac{h_{\mu}(f)-4\eta}{h_{\mu}(f)-5\eta}$$
which implies that
\begin{equation}\label{estimation}
  hM_{m+1}=M_{m+1}(h_{\mu}(f)-5\eta)\leq M_m(h_{\mu}(f)-4\eta).
\end{equation}
Combining \eqref{contract-prolong}, \eqref{h-star} and \eqref{estimation}, we obtain that
$$\sum_{B_{u}(x,\frac{\eps}{6})\in\mathcal{C}}e^{-hu}\geq1,$$
and so the claim holds.

This implies that for any $n\geq M_q$ we have
$$C(G;h,n,\frac{\eps}{6},T)\geq1,$$
therefore \eqref{estimation-of-C} holds and thus \eqref{lower-bound-of-entropy} is proved.

We have just completed the proof of the case $\htop(f)<\infty$. For the case $\htop(f)=\infty$, it is enough to replace condition $\htop(f)-\eta<\htop(f|_\Lambda)$ in Step~1 by
$\htop(f|_\Lambda)>\eta$. All the other arguments in the proof remain unchanged. We leave the details to the reader.
\end{proof}

Now we are ready to prove our first main result.
It was proved in \cite{TKSM} that if a dynamical system with the shadowing property has a recurrent not minimal point, or a minimal sensitive point, then the entropy must be positive. A similar analysis will be performed here.

\begin{proof}[\textbf{Proof of Theorem~\ref{X-full}}]
The case of $\htop(f)>0$ is covered by Theorem~\ref{thm:shadowingpte}. For the remaining case, assume on the contrary that  $\htop(f)=0$
and $I(f)\neq \emptyset$, which by Lemma~\ref{lem:quasi} equivalently means that $Q(f)\neq X$. By Lemma~\ref{lem:quasi} there is $x\in X$ be such that $\mathcal{E}_n(x)$ diverges. Take two distinct invariant measures $\mu,\nu$ that are accumulation points of the sequence $\mathcal{E}_n(x)$.  First assume that $\supp (\mu)\neq \supp(\nu)$. Without loss of generality we may assume that $\supp(\mu)\setminus\supp(\nu)\neq \emptyset$. Fix any recurrent point $z\in \supp(\nu)$. We claim that $z$ is a sensitive point. To see that, fix any $y\in \supp(\mu)$ and $\eps>0$ such that $B_{3\eps}(y)\cap \supp(\nu)=\emptyset$. Fix any $\eta\in (0,\eps)$, put $U=B_{\eta}(y)$, $V=B_{\eta}(z)$
and observe that $\mu(U)>0$ and $\nu(V)>0$. Thus, there is $\delta>0$ and increasing sequences $n_i,m_i$ such that
$\mathcal{E}_{n_i}(x)(U)>\delta$ and $\mathcal{E}_{m_i}(x)(V)>\delta$. In particular, there is a point $q\in X$ and $0<n<m$ such that
$$
d(q,z)<\eta, \quad d(f^m(q),z)<\eta, \quad d(f^n(q),f^n(z))\geq \dist(f^n(q),\supp(\nu))>\eps.
$$
Since $z$ is a recurrent point we can repeat arguments similar to those in Lemma~\ref{horseshoe}
and show that $\htop(f)>0$ (see also \cite{LO}). This is a contradiction, hence the only remaining possibility is that $\supp(\mu)=\supp(\nu)$. If $\supp(\mu)$ is not a minimal set, then we can replace $\nu$
by a measure supported on a minimal set $Y\subsetneq \supp(\mu)$ and repeat previous argument showing that $\htop(f)>0$. Therefore the only possibility is that $\supp(\mu)$ is a minimal set. By \cite{LO}, assumption $\htop(f)=0$ implies that there is no sensitive point in $\supp(\mu)$
hence $(\supp(\mu),f|_{\supp(\mu)})$ is an equicontinuous minimal system. But all such systems are uniquely ergodic (e.g. see \cite{Brown}) which gives $\mu=\nu$. This is again a contradiction, which completes the proof.
\end{proof}

\section{Proof for Theorem \ref{main-thm}}\label{proof-for-main}

We need the following lemma which is a simplified version of the shredding lemma in \cite{Abdenur-Andersson}. For convenience of the reader, we include a proof here. This is also helpful in our subsequent proof of Theorem~\ref{main-thm}, which needs some details of the construction.
\begin{lem}\label{shredding}
For every $\eps>0$ there exists a dense set $\mathcal{C}^\eps\subset H(M)$ (resp. $\mathcal{C}^\eps\subset C(M)$)
such that for very $f\in \mathcal{C}^\eps$ there are pairwise disjoint open sets $U_1,\ldots, U_N$ such that:
\begin{enumerate}[(i)]
\item\label{shredding:c0} each $U_j$ is a trapping region: $f(\overline{U_j})\subset U_j$, $j=1,\ldots,N$;

\item\label{shredding:c1} the union of sets $U_j$ occupies, Lebesgue-wise, most of $M$:
$$\Leb(\bigcup_{j=1}^N U_j)>1-\eps;$$

\item\label{shredding:c2} each $U_j$ is contained in a basin of attraction of a periodic cycle of sets, i.e. there are open sets $W_j^1,\ldots, W_j^{k_j}$ such that
\begin{enumerate}[(a)]
	\item\label{shredding:c2:a} $\diam(W_j^i)<\eps$ for $i=1,\ldots,k_j$;
	\item\label{shredding:c2:b} $f(\overline{W_j^i})\subset W_j^{i+1}$ for $i=1,\ldots,k_j-1$ and $f(\overline{W_j^{k_j}})\subset W_j^{1}$;
	\item\label{shredding:c2:c} $\overline{U_j}\subset \bigcup_{n\geq 0}f^{-n}(\bigcup_{i=1}^{k_j}(W_j^i)).$
\end{enumerate}
\end{enumerate}
\end{lem}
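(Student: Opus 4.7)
My plan is to prove density of $\mathcal{C}^\eps$ by showing that every $f_0 \in C(M)$ (resp. $H(M)$) admits arbitrarily small perturbations in $\mathcal{C}^\eps$. The strategy exploits the decomposition of $M$: I construct a perturbation $f$ whose dynamics, on most of $M$, funnel into finitely many attracting periodic cycles of small diameter, so that the basins of those cycles together cover $M$ up to Lebesgue measure $\eps$.

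Fix $f_0$ and $\delta > 0$. Using Definition~\ref{def:decomposition}, I take a sufficiently fine decomposition $\mathcal{S}=\{V_1,\ldots,V_K\}$ with $\diam(\overline{V_i}) < \eta$, where $\eta < \min(\eps/2, \delta/4)$ is chosen so that the $\eta$-neighborhood of $\bigcup_i \partial V_i$ has Lebesgue measure below $\eps$. For each cell I pick an interior point $p_i \in V_i$ and define a combinatorial map $\sigma\colon\{1,\ldots,K\}\to\{1,\ldots,K\}$ by requiring $f_0(p_i) \in \overline{V_{\sigma(i)}}$. Then I perturb $f_0$ inside each cell to a map $f$ such that $f(\overline{V_i'}) \subset V_{\sigma(i)}'$ for slightly shrunken sub-cells $V_i' \subset V_i$, while $f = f_0$ on a collar near $\partial V_i$. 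In $C(M)$ this amounts to a local post-composition with a strong contraction; in $H(M)$ I use that each $\overline{V_i}$ is homeomorphic to a closed ball to build a radial rearrangement that remains injective. Since all modifications are supported in cells of diameter $<\delta$, we have $d(f, f_0) < \delta$.

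The finite map $\sigma$ decomposes $\{1,\ldots,K\}$ into periodic cycles $\mathcal{C}_1,\ldots,\mathcal{C}_N$ and their preperiods. For each cycle $\mathcal{C}_j = (i_{j,1},\ldots,i_{j,k_j})$ I set $W_j^l := V_{i_{j,l}}'$; then (iii)(a) and (iii)(b) hold by construction. I let $U_j$ be the open combinatorial basin of $\bigcup_l W_j^l$ under $f$, shrunk slightly inside the full basin so that (i) $f(\overline{U_j}) \subset U_j$ and (iii)(c) $\overline{U_j} \subset \bigcup_{n\geq 0} f^{-n}(\bigcup_l W_j^l)$ both hold; pairwise disjointness of the $U_j$ is automatic because distinct combinatorial orbits have distinct terminal cycles. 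For (ii), every point outside the $\eta$-neighborhood of cell boundaries is absorbed combinatorially into some cycle and hence lies in some $U_j$, so $\Leb(\bigcup_j U_j) > 1 - \eps$ by the choice of $\eta$.

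The main obstacle is the $H(M)$ case, where the perturbation must remain globally injective despite strongly contracting each sub-cell onto a small image. Conjugating each $\overline{V_i}$ to a closed ball $B^k \subset \mathbb{R}^k$, I perform a radial rearrangement of the identity on $B^k$ that pushes its image into a small concentric sub-ball, then transport back by the conjugation. Matching $f_0$ on a collar near $\partial V_i$ ensures compatibility between adjacent cells, so the assembled map is a homeomorphism of $M$. The remaining steps are routine bookkeeping on the finite combinatorial dynamics of $\sigma$.
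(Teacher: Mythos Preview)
Your proposal follows essentially the same strategy as the paper: take a fine decomposition into ball-cells, define a combinatorial index map $\sigma$ tracking where cells go, pre-compose $f_0$ with a cell-wise radial ``contraction'' homeomorphism (identity on cell boundaries) so that shrunken sub-cells $V_i'$ map into $V_{\sigma(i)}'$, and then read off the trapping basins $U_j$ and attracting periodic cycles $W_j^l$ from the eventual-cycle decomposition of the finite map $\sigma$. The paper's $U_j$ is simply the union of the shrunken sub-cells $R_i^\delta$ over the combinatorial basin $\hat O_j$, which already satisfies (i) and (iii)(c) without further shrinking.

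One technical point to tighten: you define $\sigma$ via $f_0(p_i)\in\overline{V_{\sigma(i)}}$ for an arbitrarily chosen interior point $p_i$, but this does not guarantee $f_0(p_i)\in\Int V_{\sigma(i)}$, which you need so that after contraction the image lands inside the \emph{shrunken} target $V_{\sigma(i)}'$. The paper handles this explicitly: for $f_0\in H(M)$ it argues that $\Int f_0(R_i)$ must meet $\Int R_j$ for some $j$ (since the union of cell boundaries has empty interior), and then chooses $p_i$ accordingly; for $f_0\in C(M)$, where $f_0$ need not be open, it first applies a small preliminary perturbation (citing \cite{KMO2014}) to restore this interior-intersection property before running the same construction. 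Also note the paper uses \emph{pre}-composition $g=f_0\circ h$ with a single global $h\in H(M)$ in both cases, rather than the post-composition you mention for $C(M)$; this makes the $d_H$-estimate and global injectivity transparent. These are minor adjustments that fit entirely within your outline.
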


\begin{proof} \textbf{The case of $H(M).$}
We are going  to prove that for any $f\in H(M)$ and $\zeta>0$, there exists a map $g\in \mathcal{C}^{\eps}$ such that $d_H(f,g)<2\zeta$.

Fix any $f\in H(M)$ and any $\zeta,\eps>0$.
Let $0<\sigma<\min\{\zeta,\eps\}$ be such that $d(x,y)<\sigma$ implies that $d(f(x),f(y))<\zeta$.

We begin with a decomposition of $M$ (see Definition~\ref{def:decomposition}), i.e. we decompose $M$ into finitely many small pieces $\{R_i\}_{i\in I}$, say $\diam R_i<\sigma$, each homeomorphic to the simplex
$$\Delta_k=\{(x_1,x_2,\cdots,x_k)\in \mathbb{R}^k:\sum_{i=1}^kx_i\leq1~\textrm{and}~x_i\geq0~\textrm{for all}~i\}.$$

Note that $\Int(f(R_i))\neq \emptyset$ for every $i$.
	We claim that for each $i$
	there is $j$ such that $\Int(f(R_i))\cap \Int(R_{j})\neq \emptyset$. Assume on the contrary that it is not the case, that is  $\Int (f(R_i))\subset \cup_{k\in I}\partial R_k$.
Denote 	$U_1=\Int f(R_i)$ and observe that either $U_1\subset \partial R_1$ or there is $\alpha>0$ such that $U_2=U_1\cap \{x : d(x, \partial R_1)>\alpha\}\neq \emptyset$, and clearly also $U_2\subset \cup_{k\geq2}\partial R_k$.
As before, either $U_2\subset \partial R_2$ or there is a nonempty open (in $M$) set $U_3\subset \cup_{k\geq3}\partial R_k$. Since $I$ is a finite set, there is $k\in I$ and a nonempty open set
$U_k\subset M$ such that $U_k\subset \partial R_k$, which is a contradiction. Indeed the claim holds, and so we can define a function $\tau \colon I \to I$ (not necessarily surjective)
such that $\Int(f(R_i))\cap \Int(R_{\tau(i)})\neq \emptyset$ for every $i\in I$.

Once $\tau$ is chosen, we select for each $i\in I$ a point $p_i\in \Int(R_i)$ such that $f(p_i)\in \Int(R_{\tau(i)})$. For $\delta>0$, we write $R_i^{\delta}$ for the set $\{x\in R_i:d(x,\partial R_i)>\delta\}$. Since $f$ is a homeomorphism, we can choose $\delta>0$ small enough, so that each $p_i\in R_i^{\delta}$, $f(p_i)\in R_{\tau(i)}^{\delta}$ and moreover, $\Leb(\bigcup_{i\in I}R_i^{\delta})>1-\eps$.

Take $\delta'>0$ small enough to satisfy $\overline{f(B_{\delta'}(p_i))}\subset R_{\tau(i)}^{\delta}$ for every $i\in I$. We are going to construct a homeomorphism $h:M\to M$ that sends $R_i^{\delta}$ into $B_{\delta'}(p_i)$.  This can easily done by using the linear structure in each $R_i$ induced by the charts $\eta_i:R_i\to \Delta_k$.

To see this, fix any $x\in R_i$ and denote $\hat{x}=\eta_i(x)$. For any $x\neq p_i$
let $\hat{q}^x_i$ denote the unique point in $\partial \Delta_k$ on the ray starting at $\hat{p_i}$ and passing through $\hat{x}$.
Clearly the following map $\alpha\colon R_i\to \R$ is continuous
$$
\alpha(x)=\begin{cases}0, & x=p_i\\
\frac{d(\hat{x},\hat{p}_i)}{d(\hat{p}_i, \hat{q}^x_i)},&x\neq p_i
\end{cases}.
$$
Now, fix any (sufficiently large) $\beta\geq 1$ and define a homeomorphism $h_i\colon M\to M$ by
$$h_i(x)=\begin{cases}
x,&\textrm{if}~~x\notin R_i\cup \set{p_i}\\
p_i,&x=p_i\\
\eta_i^{-1}(\hat{p_i}+\alpha(x)^\beta(\hat{q}_i^x-\hat{p_i})),
&\text{otherwise}
\end{cases}.$$
Note that $\alpha|_{\partial R_i}=1$ and do $h_i|_{\partial R_i}=id$.
Denote by $h$ composition of all the maps $h_i$ in any order. Since
\begin{equation}\label{h-i-R-i}
  h_i(R_i)= h_i^{-1}(R_i)= R_i,
\end{equation}
we easily see that all $h_i$ commute, which means that the order of composition is irrelevant. It is also not hard to see that $h$ is a homeomorphism which is close to identity,
that is
$$
	d_H(h,\text{id})\leq \max_{i\in I}\diam (R_i)<\sigma.
$$
	Denote $g=f\circ h$ and observe $g\in H(M)$ since $h\in H(M)$.
It directly follows from definition of $h_i$ that for any $\delta'>0$, taking $\beta>1$ sufficiently large we can ensure inclusion
$$\overline{h(R_i^{\delta})}\subset B_{\delta'}(p_i)$$
and consequently
\begin{equation}\label{subset}
  \overline{g(R_i^{\delta})}\subset R_{\tau(i)}^{\delta}.
\end{equation}

Note that since $I$ is a finite set, there is finite number of periodic orbits $O_1,\cdots,O_N$ under action of $\tau$. Moreover, the trajectory of every $l\in I$ ends up in one of the orbits $O_i$ after at most $|I|$ iterations of $\tau$. Thus if we denote $\widehat{O}_i=\bigcup_{k=1}^{|I|}\tau^{-k}(O_i)$ for $i=1,\cdots,N$, then $I=\bigsqcup_{i=1}^N\hat{O}_i$, where $\bigsqcup$ denotes the disjoint union. For each $i=1,\ldots , N$, fix a point $z_i\in O_i$ and define
\begin{equation}
U_j=\bigcup_{i\in \hat{O}_j} R_i^{\delta}~~\textrm{ and }~~W_i^s=R_{\tau^{s}(z_i)}^{\delta}\label{eq536'}
\end{equation}
for $s=1,2,\ldots,k_i$, where $k_i=|O_i|$.

Observe that
$$
g(\overline{U_j})=g(\overline{\bigcup_{i\in \hat{O}_j} R_i^{\delta}})= \bigcup_{i\in \hat{O}_j} g(\overline{R_i^{\delta}})=\bigcup_{i\in \hat{O}_j} \overline{g(R_i^{\delta})}\subset \bigcup_{i\in \hat{O}_j} R_{\tau(i)}^{\delta}\subset \bigcup_{i\in \hat{O}_j} R_i^{\delta}=U_j,
$$
which is \eqref{shredding:c0}. It is also clear that
$$
\Leb(\bigcup_{j=1}^N U_j)=\Leb(\bigcup_{j=1}^N\bigcup_{i\in \hat{O}_j}R_i^{\delta})=\Leb(\bigcup_{i\in \cup_{j=1}^N\hat{O}_j}R_i^{\delta})=\Leb(\bigcup_{i\in I}R_i^\delta)>1-\eps
$$
which is exactly \eqref{shredding:c1}.
Note that $\diam W_i^s\leq \max_{i\in I}\diam (R_i)<\sigma<\eps$ which gives \eqref{shredding:c2:a}. By \eqref{subset} we have
$$
g(\overline{W_i^s})=g(\overline{R^{\delta}_{\tau^s(z_i)}})=\overline{g(R^{\delta}_{\tau^s(z_i)})}\subset R_{\tau^{s+1}(z_i)}^{\delta}.
$$
But if $s<k_i$ then $R_{\tau^{s+1}(i)}^{\delta}=W^{s+1}_i$ and when $s=k_i$ then $\tau^{s+1}(z_i)=\tau(z_i)$
and so in this case $R_{\tau^{s+1}(z_i)}^{\delta}=W^{1}_i$. This proves \eqref{shredding:c2:b}.
Finally, for every $i\in \hat{O}_j$ there is an $1\leq n\leq |I|$ such that $\tau^n(i)\in O_j$. Then
$$
g^n(\overline{R_i^\delta})\subset R^{\delta}_{\tau^n(i)}\subset \cup_{i=1}^{k_j} W_j^{i},
$$
which implies that $\overline{R_i^\delta}\subset g^{-n}(\cup_{i=1}^{k_j} W_j^{i})$.
But then $\overline{U_j}\subset \bigcup_{n\geq0} g^{-n}(\cup_{i=1}^{k_j} W_j^{i})$ which is \eqref{shredding:c2:c}.

Finally observe that by \eqref{h-i-R-i} for every $x\in M$ we have $d(x,h(x))<\sigma$ and $d(x,h^{-1}(x))<\sigma$
which implies by definition of $\sigma$ that for every $x\in M$ we have
$$d(f^{-1}(x),h^{-1}f^{-1}(x))<\sigma<\zeta\quad \textrm{ and }\quad d(f(x),f(h(x)))<\zeta.$$
It means that $d_H(f,g)<2\zeta$. which completes the proof of the case of $H(M)$.


%

\textbf{The case of $C(M)$}.
Observe that in the construction for $H(M)$ we in fact only require that if $f(R_i)\cap R_j\neq \emptyset$, then there exists a
point $p_i\in \Int R_i$ such that $f(p_i)\in \Int R_j$. Assume for a moment that this condition holds. This allows us to define $\tau$ and the same proof can be repeated.
For any such $f\in C(M)$, we define $h\in H(M)$ as above and put $g=f\circ h$ obtaining all desired sets $U_i$, $W_i^j$.
It is also clear that $g\in C(M)$ and $d_C(f,g)<\zeta$ since metric $d_C$ is less restrictive than $d_H$.

Observe that by Lemma 3.11 in \cite{KMO2014} for any $\zeta>0$, we can find a map $\hat{f}$ such that
if
$$
\hat{f}(R_i)\cap R_j\neq\emptyset \quad \Longrightarrow \quad \hat{f}(\Int R_i)\cap \Int R_j\neq\emptyset
$$
and $d_C(f,\hat{f})<\zeta$. Then we can find $g$ for $\hat{f}$ as sketched above, and $d_C(f,g)<2\zeta$. The proof is completed.
\end{proof}

\begin{proof}[\textbf{Proof of Theorem~\ref{main-thm}}]
We divide the proof into four intermediate steps, to make it more readable.

\textbf{Step 1:} \textit{Construction of $\Lambda\subset M$ with full $\Leb-$measure.}

Observe that each $\mathcal{C}^{\eps}$ provided by Lemma~\ref{shredding} is open and dense, hence the following set is residual
$$\mathcal{R}=\bigcap_{n\in \mathbb{N}}\mathcal{C}^{\frac{1}{n}}.$$

Fix $f\in \mathcal{R}$ and
for each $n\in \mathbb{N}$, let $V_n$ be the union of the open set $U_i$ corresponding to $C^{\frac{1}{n}}$. Denote
\begin{equation}
\Lambda=\bigcap_{n\geq1}\bigcup_{m\geq n}V_m\label{def:lambda}
\end{equation}
and observe that since $\Leb(V_n)>1-\frac{1}{n}$ for each $V_n$, we have $\Leb(\Lambda)=1$.

\textbf{Step 2: $\Lambda\subset Q(f)\cap \Delta$.}
The proof that $\Lambda\subset Q(f)$ is exactly the same as in \cite[Theorem 3.6]{Abdenur-Andersson}, therefore we leave the details to the reader.
For the rest of the proof, fix any $x\in \Lambda$ and take $\mu\in \M_f(X)$ such that $\lim_{n\to\infty}\mathcal{E}_n(x)=\mu$. We claim that $\mu$ is a SRB-like measure.

By definition, it is enough to show that for every $\epsilon>0$, the set
$$A_{\eps}(\mu)=\{y\in X:~d(\pw (y),\mu)<\eps\}$$
has positive Lebesgue measure. 
Take an integer $n>0$ such that $\frac{1}{n}<\epsilon$. By definition of $\Lambda$ there exists an $m\geq n$ such that $x\in V_m$.

By the definition of $V_m$, which were obtained in  Lemma~\ref{shredding} by \eqref{eq536'}, there exist $|I(m)|$ open sets $R_{m,i}^{\delta}$ with $\diam(R_{m,i}^{\delta})<\frac{1}{m}<\epsilon$
and a function $\tau \colon I(m)\to I(m)$
such that
$$V_m=\bigsqcup_{1\leq i\leq |I(m)|}R_{m,i}^{\delta}\quad \textrm{ and }\quad f(\overline{R_{m,i}^{\delta}})\subset R_{m,\tau(i)}^{\delta}\quad \text{ for }1\leq i\leq |I(m)|.$$
Suppose that $x\in R_{m,j}^{\delta}$ and let $d_m=\max_{1\leq i\leq |I(m)|}\diam(R_{m,i}^{\delta})$.

Observe that if we fix any $z,w\in {R_{m,j}^{\delta}}$ and any $l\geq 1$ then
$$
f^l(z)\in f^l(R_{m,j}^{\delta}),f^l(w)\in f^l(R_{m,j}^{\delta})\quad \textrm{ and }\quad f^l(R_{m,j}^{\delta})\subset R_{m,\tau^l(j)}^{\delta}.
$$
Therefore,
for any $z,w\in R_{m,j}^{\delta}$ and any $l\geq 1$ we have
\begin{equation}
\label{easy fact}
d(f^l(z),f^l(w))\leq d_m<\frac1m<\epsilon.
\end{equation}
This by Lemma~\ref{lem:prohorov} implies that for any $y\in R_{m,j}^{\delta}$ and any $n\geq 1$ we have (recall that $x\in R_{m,j}^\delta$):
$$\rho(\mathcal{E}_n(x),\mathcal{E}_n(y))\leq d_m<\epsilon$$
By the definition of $\mu$ and $\pw (y)$, there exists an integer $k>0$ such that
$$d(\mu,\E_k(x))<\frac{1}{2}(\eps-d_m)\quad \textrm{and}\quad d(\pw (y),\E_k(y))<\frac{1}{2}(\eps-d_m).$$
which yields that
\begin{eqnarray*}
  d(\pw (y),\mu) &\leq& d(\pw (y),\E_k(y))+d(\E_k(y),\E_k(x))+d(\E_k(x),\mu) \\
   &<& \frac{1}{2}(\eps-d_m)+d_m+\frac{1}{2}(\eps-d_m)=\eps.
\end{eqnarray*}
which implies that $y\in A_\eps(\mu)$, and hence, since $y\in R_{m,j}^\delta$ was arbitrary, we have $R_{m,j}^{\delta}\subset A_{\eps}(\mu).$
Recall that $R_{m,j}^\delta$ is open, therefore
$$\Leb(A_{\eps}(\mu))\geq \Leb(R_{m,j}^{\delta})>0$$
which completes our proof.

\textbf{Step 3:} $\htop(f,\Lambda)=0$.

Recall the process of construction from Lemma~\ref{shredding}. For any $n\in \mathbb{N}$ there are $|I(n)|$ small open sets $R_{n,i}^{\delta}$ with $\diam(R_{n,i}^{\delta})<\frac{1}{n}$ such that (\ref{subset}) holds. So if for each $i$ we fix a point $x_{n,i}\in R_{n,i}^{\delta}$, then  by \eqref{easy fact} for any $l\geq 1$ the set $\{x_{n,i}\}_{i=1}^{I(n)}$ constitutes an $(l,\frac{1}{n})$-spanning set of $V_n$.

On the other hand, we can 
replace $I(n)$ by its subsequence, and since diameters of $R_i^\delta$ can be arbitrarily small,
without loss of generality, we may assume that $|I(n)|< |I(n+1)|$ for each $n\geq 1$ and therefore,
\begin{equation}\label{I-n}
  |I(n)|\geq n \quad \text{ for } n\geq 1.
\end{equation}

Fix any $\sigma>0$ and select an integer $k\in \mathbb{N}$ such that $\frac{1}{k}<\sigma$. Since $\Lambda=\bigcap_{n\geq1}\bigcup_{m\geq n}V_m$, we have in particular that $\Lambda\subset \bigcup_{m\geq k}V_m$. For any $m,n\in\N$, $V_m$ can be covered by $|I(m)|$ Bowen balls $\{B_{|I(n+m)|}(x_{m,i},\frac1m)\}_{i=1}^{|I(m)|}$, so we have
\begin{eqnarray*}
  C(\Lambda;t,n,\sigma,f)&=&\inf_{\mathcal {C}\in \mathcal {G}_{n}(\Lambda,\sigma)}\sum_{B_{u}(x,\sigma)\in \mathcal {C}}e^{-tu} \\
    &\leq& \sum_{m\geq k}|I(m)|e^{-t|I(n+m)|} \\
    &\leq& \sum_{m\geq k}|I(n+m)|e^{-t|I(n+m)|} \\
    &\leq& \sum_{s\geq |I(n+k)|}se^{-ts} \\
    &=& \frac{|I(n+k)|e^{-t|I(n+k)|}}{1-e^{-t}}+\frac{e^{-t(|I(n+k)|+1)}}{(1-e^{-t})^2}.
\end{eqnarray*}

By our assumptions, when $n\to \infty$ then also $|I(n)|\to \infty$. Thus we see that for any $t>0$,
$$C(\Lambda;t,\sigma,f):=\lim_{n\rightarrow\infty}C(\Lambda;t,n,\sigma,f)=0,$$
which implies that
$$\htop(\Lambda;\sigma,f):=\inf\{t:C(\Lambda;t,\sigma,f)=0\}=0.$$
This proves that
$$\htop(f,\Lambda)=\lim_{\sigma\rightarrow0} \htop(\Lambda;\sigma,f)=0.$$

\textbf{Step 4:}
On one hand it is known (see \cite{KMO2014} and \cite{ Koscielniak2,Koscielniak}, respectively) that shadowing is a generic property in $C(M)$  and in $H(M)$. On the other hand, \cite{Koscielniak,Yano1980} show that for generic maps in $C(M)$ (resp. $H(M))$ we have $\htop(f)=\infty$. Thus by Theorem \ref{X-full}, we obtain that
$$
\htop(f,I(f))=\infty
$$
completing the proof.
\end{proof}

\section*{Acknowledgment}
The authors would like to thank anonymous referee for careful reading of the paper and valuable suggestions and comments.

The authors are grateful to Jian Li and Dominik Kwietniak for their numerous remarks and fruitful discussions on the relations
between entropies considered in the paper and topological properties of $\M(X)$.

Research of P. Oprocha was partly supported by the project ``LQ1602 IT4Innovations excellence in science'' and AGH local grant 10.420.003.

The research of X. Tian and Y. Dong was  supported by National Natural Science Foundation of China (grant no. 11671093).

\end{document}